\newtheorem{Def}{Definition}[section]
\newtheorem{Prop}[Def]{Proposition}
\newtheorem{Lem}[Def]{Lemma}
\newtheorem{Thm}[Def]{Theorem}
\newtheorem{Cor}[Def]{Corollary}
\newtheorem{Ass}[Def]{Assumption}
\theoremstyle{definition}
\newtheorem{Rem}[Def]{Remark}
\newcommand{\p}{\mathbb{P}}
\newcommand{\e}{\mathbb{E}}
\newcommand{\real}{\mathbb{R}}
\newcommand{\n}{\mathbb{N}}
\newcommand{\1}{{\bf 1}}
\newcommand{\rd}{\mathrm{d}}
\begin{document}

\title{
	On the Euler--Maruyama scheme for degenerate stochastic differential equations with non-sticky condition
}
\author{
	Dai Taguchi
	\footnote{
		Graduate School of Engineering Science,
		Osaka University,
		1-3, Machikaneyama-cho, Toyonaka,
		Osaka, 560-8531, Japan,
		Email: \texttt{dai.taguchi.dai@gmail.com}
	}
	\quad and \quad
	Akihiro Tanaka
	\footnote{
		Graduate School of Engineering Science,
		Osaka University,
		1-3, Machikaneyama-cho, Toyonaka,
		Osaka, 560-8531, Japan
		/
		Sumitomo Mitsui Banking Corporation,
		1-1-2, Marunouchi, Chiyoda-ku, 
		Tokyo, 100-0005, Japan,
		Email: \texttt{tnkaki2000@gmail.com}
	}
}
\maketitle
\begin{abstract}
The aim of this paper is to study weak and strong convergence of the Euler--Maruyama scheme for a solution of one-dimensional degenerate stochastic differential equation $\mathrm{d} X_t=\sigma(X_t) \mathrm{d} W_t$ with non-sticky condition.
For proving this, we first prove that the Euler--Maruyama scheme also satisfies non-sticky condition.
As an example, we consider stochastic differential equation $\mathrm{d} X_t=|X_t|^{\alpha} \mathrm{d} W_t$, $\alpha \in (0,1/2)$ with non-sticky boundary condition and we give some remarks on CEV models in mathematical finance.
\\
\textbf{2010 Mathematics Subject Classification}: 65C30; 60H35; 91G60\\
\textbf{Keywords}:
stochastic differential equations;
non-sticky condition;
Euler--Maruyama scheme;
H\"older continuous diffusion coefficient;
mathematical finance;
CEV models.
\end{abstract}

\section{Introduction}
Let $X=(X_t)_{t \in [0,T]}$ be a solution of one-dimensional stochastic differential equations (SDEs)
\begin{align}\label{SDE_1}
	\rd X_t
	=\sigma(X_t)\rd W_t,
	~t \in [0,T],~X_0=x_0\in \real,
\end{align}
where $W=(W_t)_{t \in [0,T]}$ is a one-dimensional standard Brownian motion on a probability space $(\Omega, \mathcal{F},\p)$ with a filtration $(\mathcal{F}_t)_{t\geq 0}$ satisfying the usual conditions.
It is well-known that if the coefficient $\sigma$ is Lipschitz continuous then a solution of the equation \eqref{SDE_1} can be constructed by a limit of Picard's successive approximation, and the solution satisfies the pathwise uniqueness.

In the one-dimensional setting, Engelbert and Schmidt \cite{EnSc84} provided an equivalent condition on $\sigma$ for the existence of a weak solution and uniqueness in law for SDE \eqref{SDE_1}, by using time change of a Brownian motion (see also \cite{EnHe80}).
More precisely, they proved that the equation \eqref{SDE_1} has a non-exploding weak solution for every initial condition $X_0=x_0 \in \real$ if and only if $I(\sigma) \subset Z(\sigma)$, and the solution is unique in the sense of probability law if and only if $I(\sigma) = Z(\sigma)$.
Here the sets $I(\sigma)$ and $Z(\sigma)$ are defined by
$
	I(\sigma)
	:=
	\{
		x \in \real
		;
		\int_{-\varepsilon}^{\varepsilon}
			\sigma(x+y)^{-2}
		\rd y
		=
		+\infty
		,~\forall \varepsilon>0
	\}
$
and
$
	Z(\sigma)
	:=
	\{
	x\in \real
	~;~
	\sigma(x)=0
	\}.
$
However, there exists a function $\sigma$ such that $I(\sigma) \varsubsetneq Z(\sigma)$, so in this setting, the uniqueness in law does not hold.
For example, if $\sigma(x):=|x|^{\alpha}$ for $\alpha \in (0,1/2)$ then $I(\sigma)=\emptyset$, $Z(\sigma)=\{0\}$, and if $x_0=0$ then $X_t=0$ and the time change a Brownian motion are solutions of the SDE, and moreover, if $x_0 \neq 0$, there is a solution which spends at zero.
Therefore, as a concept of a solution of SDE, Engelbert and Schmidt \cite{EnSc85} introduced a {\it fundamental solution} of the equation \eqref{SDE_1}, which is a solution of SDE \eqref{SDE_1} with the following {\it non-sticky condition}:
\begin{align}\label{boundary_0}
	\e\left[
		\int_{0}^{T}
			\1_{Z(\sigma)}(X_s)
		\rd s
	\right]
	=0,
\end{align}
that is, in other words, $\sigma^2(X_s(\omega))>0$, $\mathrm{Leb} \otimes \p$-a.e., and proved that there exists a weak solution for a fundamental solution of SDE \eqref{SDE_1} and uniqueness in law holds (see, Theorem 5.4 in \cite{EnSc85}).

On the other hand, the pathwise uniqueness for a solution of SDE is an important concept of a uniqueness for a solution of SDE.
Yamada and Watanabe \cite{YaWa} proved that the pathwise uniqueness implies uniqueness in the sense of probability law, and weak existence and pathwise uniqueness imply the solution is a strong solution.
Moreover, they also showed that under one-dimensional setting, if the diffusion coefficient $\sigma$ is $\alpha$-H\"older continuous with exponent $\alpha \in [1/2,1]$, then the pathwise uniqueness holds (see also \cite{LeGall} and \cite{Nakao} for dis-continuous setting of $\sigma$).
Besides, Girsanov \cite{Gi62} and Barlow \cite{Ba82} provided some examples of $\alpha$-H\"older continuous function $\sigma$ with $\alpha \in (0,1/2)$ such that the pathwise uniqueness fails for SDE \eqref{SDE_1}, and thus the H\"older exponent $\alpha = 1/2$ is sharp.

Under such a background on the pathwise uniqueness, Manabe and Shiga \cite{MaShi73} studied a solution of SDEs with non-sticky boundary condition $\e[\int_{0}^{T}\1_{\{0\}}(X_s)\rd s]=0$ (see also page 221 of \cite{IkWa}).
They proved that if the diffusion coefficient $\sigma$ is bounded, continuous and odd function and continuously differentiable on $\real \setminus \{0\}$ such that (i) $\int_{0}^{\delta}\sigma(y)^{-2} \rd y<\infty$ for some $\delta>0$ and (ii) the limit $\lim_{x \searrow 0}x a'(x) a(x)^{-1}$ exists and is not $1/2$, then two solutions $X^1$ and $X^2$ of SDE \eqref{SDE_1} with non-sticky boundary condition $\e[\int_{0}^{T}\1_{\{0\}}(X_s)\rd s]=0$ and the same initial value and driven by the same Brownian motion, satisfy $\p(|X_t^1|=|X_t^2|,~\forall t\geq 0)=1$.
However, sign of solutions does not know from the information of driving Brownian motion.
Moreover, additionally if $\sigma(0)=0$ then the pathwise uniqueness holds for the following reflected SDE with non-sticky boundary condition
\begin{align}\label{RSDE_1}
	X_t
	=
	x_0
	+
	\int_{0}^{t}
		\sigma(X_s)
	\rd W_s
	+L_t^0(X)
	\geq 0,
	~
	\e\left[
		\int_{0}^{t}
			\1_{\{0\}}(X_s)
		\rd s
	\right]
	=
	0,
	~t \in [0,T],~x_0 \geq 0,
\end{align}
where $L^0(X)$ is a local time of $X$ at the origin.
Recently, these results were extended by Bass and Chen \cite{BaCh} and Bass, Burdzy and Chen \cite{BaBuCh}.
It was shown in \cite{BaBuCh} (resp. \cite{BaCh}) that if $\sigma(x)=|x|^{\alpha}$ with $\alpha \in (0,1/2)$ then a strong solution of SDE \eqref{SDE_1} with non-sticky boundary condition $\e[\int_{0}^{T}\1_{\{0\}}(X_s)\rd s]=0$ (resp. reflected SDE \eqref{RSDE_1}) exists and the pathwise uniqueness holds by using excursion theory and pseudo-strong Markov property (resp. approximation argument).
Note that in the case of $\alpha=0$, that is, $\sigma(x)=\1(x \neq 0)$, Pascu and Pascu \cite{PaPa11} studies sticky and non-sticky solutions.

Under the viewpoint of numerical analysis, we often use the Euler--Maruyama scheme $X^{(n)}=(X^{(n)}_t)_{t \in [0,T]}$ which is a discrete approximation for a solution of SDE \eqref{SDE_1} defined by
$
	\rd 
	X_t^{(n)}
	=
	\sigma(X_{\eta_n(t)}^{(n)})
	\rd W_t,~
	X_0^{(n)}=x_0,~t \in [0,T],
$
where $\eta_n(s):=t_{k}^{(n)}=kT/n$, if $s \in [t_{k}^{(n)},t_{k+1}^{(n)})$.
It is well-known that if the coefficient $\sigma$ is Lipschitz continuous, then $X^{(n)}$ has strong ($L^p$-$\sup$) rate of convergence 1/2, that is, $\e[\sup_{0 \leq t \leq T}|X_t-X_t^{(n)}|^{p}]^{1/p} \leq C n^{-1/2}$ for any $p \geq 1$, (see, e.g. \cite{KP}).
On the other hand, the Euler--Maruyama scheme can be applied to many directions not only numerical analysis.
Indeed, Maruyama \cite{Mar54} used the scheme for proving Girsanov's theorem for one-dimensional SDE $\rd X_t=b(X_t)\rd t + \rd W_t$.
Moreover, Skorokhod constructed a (weak) solution of SDE with continuous and linear growth coefficients as a limit of the Euler--Maruyama scheme (see, chapter 3, section 3 in \cite{Sk65}).
Skorokhod's arguments can be also applied to a construction of a solution, which is based on the approximation argument of the coefficients, (see, e.g. chapter 3 in \cite{TaHa64}).
On the other hand, Yamada \cite{Ya78} proved that if the diffusion coefficient $\sigma$ is $\alpha$-H\"older continuous with $\alpha \in [1/2,1]$, then the Euler--Maruyama scheme $X^{(n)}$ converges to the unique strong solution of SDE in $L^2$-$\sup$ sense.
Recently, in the same setting, the rate of convergence was provided (see, \cite{GyRa} and \cite{Yan}), by using Yamada and Watanabe approximation arguments or It\^o--Tanaka formula.
The result of Yamada \cite{Ya78} also extended by Kaneko and Nakao \cite{KaNa}.
They showed that by using the similar arguments of Skorokhod \cite{Sk65}, if the pathwise uniqueness holds for SDE with continuous and linear growth coefficients, then the Euler--Maruyama scheme and a solution of SDE with smooth approximation of the coefficients converge to the solution of corresponding SDE in $L^2$-$\sup$ sense.
For results on weak convergence, when the uniqueness in law holds for SDE with dis-continuous coefficients, then Yan \cite{Yan} provided some equivalent conditions for the weak convergence of the Euler--Maruyama scheme, by using a limit theorem of stochastic integrals.
Moreover, recently, Ankirchner, Kruse and Urusov \cite{AnKrUr18} proved that the weak convergence of the Euler--Maruyama scheme with continuous diffusion coefficient $\sigma$ such that $I(\sigma) = Z(\sigma)=\emptyset$.

Inspired by the above previous works, in this paper, we study weak and strong convergence of the Euler--Maruyama scheme for a solution of SDE \eqref{SDE_1} with non-sticky condition \eqref{boundary_0}.
We first prove that the Euler--Maruyama scheme $X^{(n)}$ defined below (see, \eqref{EM_0}) also satisfies the non-sticky condition \eqref{boundary_0}.
As an application of this fact, we prove that the Euler--Maruyama scheme converges weakly to a unique non-sticky weak solution of SDE, and if the pathwise uniqueness holds then it converges to a unique non-sticky strong solution of SDE in $L^p$-$\sup$ sense for any $p \geq 1$.
The idea of proof is also based on arguments of Yan \cite{Yan} and Skorokhod \cite{Sk65}, and prove that by using occupation time formula if the limit of sub-sequence of the Euler--Maruyama scheme exists, then the limit satisfies the non-sticky condition (see, Lemma \ref{Lem_2} below).
As an example, the unique strong solution of SDE $\rd X_t=|X_t|^{\alpha}\rd W_t$ with non-sticky boundary condition $\e[\int_{0}^{T}\1_{\{0\}}(X_s)\rd s]=0$ for $\alpha \in (0,1/2)$ can be approximated by the Euler--Maruyama scheme. 


This paper is structured as follows.
In section \ref{sec_2}, we prove the weak (resp. strong) convergence for the the Euler--Maruyama scheme to a solution of SDE with non-sticky condition by using the uniqueness in law (resp. pathwise uniqueness).
In subsection \ref{sub_sec_2_1}, we provide the definition of the Euler--Maruyama scheme and prove that it satisfies the non-sticky condition.
In subsection \ref{sub_sec_2_2}, we state the main theorems of this present paper.
We prove some auxiliary estimates in subsection \ref{sub_sec_2_3} and provide the proof of main theorems in subsection \ref{sub_sec_2_4}.

\subsection*{Notations}
We give some basic notations and definitions used throughout this paper.
For a Lipschitz continuous function $f:\real \to \real$, we define $\|f\|_{\mathrm{Lip}}:=\sup_{x \neq y}\frac{|f(x)-f(y)|}{|x-y|}$.
For a given $T>0$, we denote by $C[0,T]$ the space of continuous functions $w:[0,T] \to \real$ with metric $\rho$ defined by $\rho(w,w')=\sup_{0\leq t \leq T} |w_t-w_t'|$, and by $C_b(C[0,T]^{k};\real)$, $k \in \n$, a continuous function $f:C[0,T] \to \real$ such that $\sup_{w \in C[0,T]^{k}}|f(w)|$ is finite.
We denote the sign function by $\mbox{sgn}(x):=-\1_{(-\infty,0]}(x)+\1_{(0,\infty)}(x)$ for $x \in \real$.
For a measurable function $\sigma:\real \to \real$, we define
$
	I(\sigma)
	:=
	\{
		x \in \real
		;
		\int_{-\varepsilon}^{\varepsilon}
		\sigma(x+y)^{-2}
			\rd y
		=
		+\infty
		,~\forall \varepsilon>0
	\}
$
and
$
	Z(\sigma)
	:=
	\{
	x\in \real
	~;~
	\sigma(x)=0
	\}
$, and we denote by $D(\sigma)$ the set of all dis-continuous points of $\sigma$.
For a continuous semi-martingale $Y=(Y_t)_{t \geq 0}$, we denote $L^x(Y)=(L_t^x(Y))_{t \geq 0}$ the symmetric local time of $Y$ at the level $x \in \real$. 
We may write a solution of SDE \eqref{SDE_1} by expressing $(X,W)$.

\section{Weak and strong convergence for the Euler--Maruyama scheme}
\label{sec_2}

Throughout this paper, we suppose the following assumptions for the diffusion coefficient $\sigma$.
\begin{Ass}
	$\sigma:\real \to \real$ is a measurable function and $Z(\sigma)$ is not the empty set and is a countable set, that is, $\sigma$ is degenerate.
\end{Ass}

\subsection{Euler--Maruyama scheme}\label{sub_sec_2_1}

We define the Euler-Maruyama scheme $X^{(n)}=(X_t^{(n)})_{t \in [0,T]}$ for SDE \eqref{SDE_1} by
\begin{align}\label{EM_0}
	X_t^{(n)}
	=
	x_n
	+\int_{0}^{t}
		\sigma(X_{\eta_n(s)}^{(n)})
	\rd W_s,
\end{align}
where the sequence $\{x_n\}_{n \in \n} \subset \real \setminus Z(\sigma)$ satisfies $\lim_{n \to \infty}x_n = x_0 \in \real$ and $\eta_n(s):=t_{k}^{(n)}=kT/n$, if $s \in [t_{k}^{(n)},t_{k+1}^{(n)})$.
Note that since $Z(\sigma)$ is a countable set, there exists such a sequence $\{x_n\}_{n \in \n}$.
From here, we fix the sequence $\{x_n\}_{n\in\n}$.

\begin{Rem}
	Usually the initial value of the Euler--Maruyama scheme $X^{(n)}_{0}$ is defined by $x_0$.
	However, if $Z(\sigma) \neq \emptyset$ and $X_0^{(n)}=x_0 \in Z(\sigma)$, then $X_t^{(n)}=x_0$ for all $t \in [0,T]$.
	Therefore, in order to approximate a solution of SDE \eqref{SDE_1} with non-sticky condition \eqref{boundary_0}, we need to take an approximate sequence $\{x_n\}_{n \in \n}$ from $\real \setminus Z(\sigma)$.
\end{Rem}

Now we prove that the Euler--Maruyama scheme \eqref{EM_0} satisfies the non-sticky condition.

\begin{Lem}\label{Lem_EM_0}
	For any $n \in \n$, $X^{(n)}$ satisfies the non-sticky condition
	\begin{align*}
		\e\left[
			\int_{0}^{T}
				\1_{Z(\sigma)}(X_s^{(n)})
			\rd s
		\right]
		=0.
	\end{align*}
\end{Lem}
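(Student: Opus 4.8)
The plan is to reduce the statement to a pointwise-in-time estimate and then exploit the piecewise-frozen structure of the scheme. Since the integrand is nonnegative and $(s,\omega)\mapsto\1_{Z(\sigma)}(X_s^{(n)}(\omega))$ is jointly measurable ($X^{(n)}$ has continuous, adapted paths and $Z(\sigma)$ is Borel), Tonelli's theorem gives
\[
	\e\left[
		\int_{0}^{T}
			\1_{Z(\sigma)}(X_s^{(n)})
		\rd s
	\right]
	=
	\int_{0}^{T}
		\p\big(X_s^{(n)} \in Z(\sigma)\big)
	\rd s,
\]
so it suffices to show that $\p(X_s^{(n)} \in Z(\sigma))=0$ for Lebesgue-almost every $s \in [0,T]$. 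The crucial observation is that on each interval $[t_k^{(n)}, t_{k+1}^{(n)})$ the coefficient is frozen, so from \eqref{EM_0}, for $s \in [t_k^{(n)}, t_{k+1}^{(n)})$,
\[
	X_s^{(n)}
	=
	X_{t_k^{(n)}}^{(n)}
	+
	\sigma\big(X_{t_k^{(n)}}^{(n)}\big)\big(W_s - W_{t_k^{(n)}}\big),
\]
which, conditionally on $\mf_{t_k^{(n)}}$, is Gaussian with mean $X_{t_k^{(n)}}^{(n)}$ and variance $\sigma(X_{t_k^{(n)}}^{(n)})^2 (s - t_k^{(n)})$.

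First I would establish, by induction on $k$, that $\p(X_{t_k^{(n)}}^{(n)} \in Z(\sigma))=0$ for every grid point $t_k^{(n)}$, $k=0,\dots,n$. The base case holds because $X_0^{(n)}=x_n \in \real \setminus Z(\sigma)$ by the choice of the approximating sequence. For the inductive step, assuming $\p(X_{t_k^{(n)}}^{(n)} \in Z(\sigma))=0$ gives $\sigma(X_{t_k^{(n)}}^{(n)}) \neq 0$ almost surely; hence, conditionally on $\mf_{t_k^{(n)}}$, the variable $X_{t_{k+1}^{(n)}}^{(n)}$ has a Gaussian law with strictly positive variance. Since $Z(\sigma)$ is countable, it is a Lebesgue-null set, and a nondegenerate Gaussian law assigns it zero probability; taking expectations yields $\p(X_{t_{k+1}^{(n)}}^{(n)} \in Z(\sigma))=0$.

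With the grid-point estimate in hand, the general time $s$ follows by the same reasoning. For $s \in (t_k^{(n)}, t_{k+1}^{(n)})$ we have $\sigma(X_{t_k^{(n)}}^{(n)}) \neq 0$ a.s., so conditionally on $\mf_{t_k^{(n)}}$ the variable $X_s^{(n)}$ is Gaussian with positive variance $\sigma(X_{t_k^{(n)}}^{(n)})^2(s - t_k^{(n)})>0$, and therefore $\p(X_s^{(n)} \in Z(\sigma))=0$. Since the grid points form a finite (hence Lebesgue-null) set, the integral above vanishes, which is the assertion.

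The step I expect to require the most care is the induction that keeps the frozen coefficient away from $Z(\sigma)$. If $X_{t_k^{(n)}}^{(n)}$ were to land in $Z(\sigma)$ with positive probability, then on that event the scheme would remain frozen at a zero of $\sigma$ throughout $[t_k^{(n)}, t_{k+1}^{(n)})$ (indeed forever after), contributing a positive expected occupation time and breaking the conclusion. What rescues the argument is precisely the countability of $Z(\sigma)$ combined with the nondegeneracy of the Gaussian increment and the choice of $x_n$ outside $Z(\sigma)$; these propagate through the induction to guarantee that the coefficient stays nonzero almost surely at every step.
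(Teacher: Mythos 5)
Your proof is correct and takes essentially the same route as the paper's: an induction over the grid exploiting the frozen Gaussian increment on each step, the countability of $Z(\sigma)$ (so a nondegenerate Gaussian law charges it zero), the choice $x_n \in \real \setminus Z(\sigma)$, and Tonelli to pass from the pointwise-in-time probability to the expected occupation time. The only cosmetic difference is that you first induct on the grid points $t_k^{(n)}$ and then handle intermediate times $s$, whereas the paper runs the induction directly on the statement $\p(X_s^{(n)} \notin Z(\sigma))=1$ for all $s \in (t_k^{(n)}, t_{k+1}^{(n)}]$.
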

\begin{proof}
	We first prove by induction that for each $k=0,\ldots,n-1$, it holds that
	\begin{align*}
		\p(X_s^{(n)} \notin Z(\sigma))=1,
		~\text{for any}~
		s \in (t_{k}^{(n)},t_{k+1}^{(n)}].
	\end{align*}
	
	Since $X_s^{(n)}=x_n+\sigma(x_n)W_s$ for any $s \in (0,t_{1}^{(n)}]$ and $\sigma(x_n) \neq 0$, we have
	$\p(X_s^{(n)} \in Z(\sigma))=0$, that is, $\p(X_s^{(n)} \notin Z(\sigma))=1$.
	Thus the statement holds for $k=0$.
	
	Now we assume that the statement holds for $\ell=1,\ldots,k-1$.
	Then since
	$
		X_s^{(n)}
		=
		X_{t_k^{(n)}}^{(n)}
		+
		\sigma(X_{t_k^{(n)}}^{(n)})
		(W_s-W_{t_{k}^{(n)}})
	$
	for any $s \in (t_{k}^{(n)},t_{k+1}^{(n)}]$, by the assumption $\p(X_{t_{k}^{(n)}}^{(n)} \notin Z(\sigma))=1$, we have
	\begin{align*}
		\p(X_s^{(n)} \in Z(\sigma))
		&
		=
		\e\left[
			\p\left(
				X_{t_k^{(n)}}^{(n)}
				+
				\sigma(X_{t_k^{(n)}}^{(n)})
				(W_s-W_{t_{k}^{(n)}})
				\in Z(\sigma)
				~\Big|~
				X_{t_k^{(n)}}^{(n)}
			\right)
			\1(X_{t_k^{(n)}}^{(n)} \notin Z(\sigma))
		\right].
	\end{align*}
	Note that random variables $X_{t_k^{(n)}}^{(n)}$ and $W_s-W_{t_{k}^{(n)}}$ are independent, thus we have
	\begin{align*}
	\p(X_s^{(n)} \in Z(\sigma))
	&=
	\e\left[
		\p\left(
			x
			+\sigma(x)
			(W_s-W_{t_{k}^{(n)}})
			\in Z(\sigma)
		\right)
		\Big|_{x=X_{t_k^{(n)}}^{(n)}}
		\1(X_{t_k^{(n)}}^{(n)} \notin Z(\sigma))
	\right]
	=0.
	\end{align*}
	This concludes the case for $k$.
	Hence we have for each $k=0,\ldots,n-1$, it holds that $\p(X_s^{(n)} \notin Z(\sigma))=1$ for any $s \in (t_{k}^{(n)},t_{k+1}^{(n)}]$.
	
	Using this fact, we have
	\begin{align*}
		\e\left[
			\int_{0}^{T}
				\1_{Z(\sigma)}(X_s^{(n)})
			\rd s
		\right]
		=\sum_{k=0}^{n-1}
			\int_{t_{k}^{(n)}}^{t_{k+1}^{(n)}}
				\p(X_s^{(n)} \in Z(\sigma) )
			\rd s
		=0,
	\end{align*}
	which concludes the statement.
\end{proof}

\subsection{Main results}\label{sub_sec_2_2}

In this subsection, we provide a weak and strong convergence for the Euler--Maruyama scheme.

We need the following assumptions on the diffusion coefficient $\sigma$.

\begin{Ass}\label{Ass_1}
	
	\begin{itemize}
		\item[(i)]
		For any $z \in Z(\sigma)$,
		\begin{align*}
			\lim_{\varepsilon \searrow 0}
			\int_{-\varepsilon}^{\varepsilon}
				\frac{1}{\sigma(z+y)^2}
			\rd y
			=0.
		\end{align*}
		\item[(ii)]
		The diffusion coefficient $\sigma$ is of linear growth, (i.e., there exists $K>0$ such that for any $x \in \real$, $|\sigma(x)| \leq K(1+|x|)$), continuous almost everywhere with respect to Lebesgue measure  and $\sigma_1(y)^2>0$ for any $y \in D(\sigma)$, where $\sigma_1(y)^2:=\liminf_{x \to y} \sigma(x)^2$ for $y \in \real$.

	\end{itemize}
\end{Ass}

\begin{Rem}\label{Rem_0}
	\begin{itemize}
		\item[(i)]
		Assumption \ref{Ass_1} (i) implies that for any $z \in Z(\sigma)$, there exists $\varepsilon(z)>0$ such that
		$
		\int_{-\varepsilon(z)}^{\varepsilon(z)} \frac{1}{\sigma(z+y)^2}
		\rd y <\infty
		$,
		thus
		$I(\sigma)=\emptyset \neq Z(\sigma)$.
		Therefore from the result of Engelbert and Schmidt (see, e.g. Theorem 5.5.7 in \cite{KS}), the uniqueness in law does not hold for SDE \eqref{SDE_1}.
		However, it follows from Theorem 5.4 in \cite{EnSc85} that a solution of SDE \eqref{SDE_1} with non-sticky condition \eqref{boundary_0} exists and uniqueness in law holds by using time change of a Brownian motion.
		
		\item[(ii)]
		It follows from Assumption \ref{Ass_1} (i) that if the Euler--Maruyama scheme converges to some stochastic process, almost surely, then the limit satisfies the non-sticky condition, (see, Lemma \ref{Lem_2} (iv)).
	\end{itemize}
	
\end{Rem}

We obtain the following result on the weak convergence of the Euler--Maruyama scheme.

\begin{Thm}\label{main_0}
	Suppose that Assumption \ref{Ass_1} holds.
	Let $X=(X_t)_{0\leq t \leq T}$ be a solution of SDE \eqref{SDE_1} with non-sticky condition \eqref{boundary_0} and $\{X^{(n)}\}_{n \in \n}$ be the Euler--Maruyama scheme for $X$ defined by \eqref{EM_0}.
	Then for any $f \in C_b(C[0,T];\real)$,
	\begin{align*}
		\lim_{n \to \infty}
		\e[f(X^{(n)})]
		=
		\e[f(X)].
	\end{align*}
\end{Thm}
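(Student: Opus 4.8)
The plan is to establish weak convergence by the classical tightness-plus-identification-of-the-limit strategy, using the non-sticky condition as the crucial ingredient that pins down the limit uniquely. First I would show that the family $\{X^{(n)}\}_{n\in\n}$ is tight in $C[0,T]$. This follows from the linear growth of $\sigma$ in Assumption \ref{Ass_1}(ii): standard moment estimates for the Euler--Maruyama scheme give $\e[|X_t^{(n)}-X_s^{(n)}|^{2p}]\leq C|t-s|^{p}$ uniformly in $n$, and by Kolmogorov's tightness criterion together with the uniform bound on $\e[\sup_{t\leq T}|X_t^{(n)}|^{2p}]$ the family of laws is relatively compact. Since $X^{(n)}$ is a continuous martingale driven by $W$, I would actually track the joint laws of $(X^{(n)},W)$ on $C[0,T]^2$, which are tight as well.

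By Prohorov's theorem, any subsequence has a further subsequence along which $(X^{(n_k)},W)$ converges in law; invoking the Skorokhod representation theorem, I would realize these on a common probability space so that the convergence is almost sure in the sup-metric, say $(\widetilde{X}^{(n_k)},\widetilde{W}^{(n_k)})\to(\widetilde{X},\widetilde{W})$ a.s. The heart of the argument is then to identify the limit $(\widetilde{X},\widetilde{W})$ as a solution of SDE \eqref{SDE_1} satisfying the non-sticky condition \eqref{boundary_0}. For the non-sticky property I would appeal directly to the mechanism advertised in Remark \ref{Rem_0}(ii) and Lemma \ref{Lem_2}(iv): once the scheme converges almost surely, the occupation-time/local-time argument combined with Assumption \ref{Ass_1}(i) forces $\e[\int_0^T \1_{Z(\sigma)}(\widetilde{X}_s)\,\rd s]=0$. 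For the SDE itself I must pass to the limit in the stochastic integral $\int_0^t \sigma(X^{(n_k)}_{\eta_{n_k}(s)})\,\rd W_s$; here I would use a limit theorem for stochastic integrals in the spirit of Yan \cite{Yan}, controlling the coefficient via its almost-everywhere continuity ($\sigma$ is continuous off the Lebesgue-null set $D(\sigma)$) and handling the possible discontinuity set with the condition $\sigma_1(y)^2>0$ on $D(\sigma)$, which ensures the limiting occupation measure charges $D(\sigma)$ negligibly.

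The final step is to remove the subsequence. Because Assumption \ref{Ass_1}(i) gives $I(\sigma)=\emptyset\neq Z(\sigma)$, Theorem 5.4 of Engelbert--Schmidt \cite{EnSc85} (cited in Remark \ref{Rem_0}(i)) guarantees that the non-sticky solution of \eqref{SDE_1} is unique in law. Thus every subsequential limit has the same law, namely $\mathrm{Law}(X)$, and a standard argument shows that the whole sequence $\mathrm{Law}(X^{(n)})$ converges weakly to $\mathrm{Law}(X)$; evaluating against $f\in C_b(C[0,T];\real)$ yields $\lim_{n\to\infty}\e[f(X^{(n)})]=\e[f(X)]$.

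I expect the main obstacle to be the identification of the stochastic-integral limit when $\sigma$ is merely measurable and discontinuous on $D(\sigma)$: one cannot use continuity of $\sigma$ naively, so the convergence $\sigma(X^{(n_k)}_{\eta_{n_k}(s)})\to\sigma(\widetilde{X}_s)$ must be justified in an $L^2(\rd s\otimes\rd\p)$ or in-probability sense rather than pointwise. The key is that the time-discretized argument $X^{(n_k)}_{\eta_{n_k}(s)}$ converges to $\widetilde{X}_s$ almost everywhere, and that the limiting process spends zero time in $Z(\sigma)\cup D(\sigma)$ thanks to the non-sticky condition and the positivity hypothesis $\sigma_1^2>0$ on $D(\sigma)$; establishing that the occupation measure of $\widetilde{X}$ does not charge the bad set---so that the discontinuities of $\sigma$ are seen only on a null set along the trajectory---is the delicate point where Lemma \ref{Lem_2} and the occupation-time formula will carry the load.
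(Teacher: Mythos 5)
Your skeleton (tightness from the linear-growth moment bounds, Skorokhod representation along a sub-subsequence, identification of the limit as a non-sticky solution, uniqueness in law from Engelbert--Schmidt, and the subsequence principle to remove the subsequence) is exactly the paper's, and your treatment of the non-sticky property of the limit via Assumption \ref{Ass_1}(i) and the occupation-time/local-time bounds is precisely the content of Lemma \ref{Lem_2}(iv). The gap is in the identification of the SDE. You propose to pass to the limit in the stochastic integral $\int_0^t \sigma(X^{(n_k)}_{\eta_{n_k}(s)})\,\rd W_s$ by a Skorokhod/Yan-type limit theorem, with the integrand convergence justified only in $\rd s \otimes \rd \p$-measure. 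But Assumption \ref{Ass_1} allows $\sigma$ to be merely measurable (a.e.\ continuous, with $\sigma_1^2>0$ on $D(\sigma)$), and for such $\sigma$ this step cannot be carried out with the tools you cite: Skorokhod's theorem (Lemma \ref{Lem_Sk}) requires the stochastic equicontinuity condition (b) for the integrands $t \mapsto \sigma(X^{(n_k)}_{\eta_{n_k}(t)})$, which the paper is able to verify only when $\sigma$ is continuous (this is exactly the proof of Lemma \ref{Lem_2}(vi), which is reserved for the strong convergence Theorem \ref{main_1}); and an It\^o-isometry argument using only $L^2(\rd s\otimes\rd\p)$ convergence of the integrands fails because the integrators $\widehat{W}^{k_{\ell}}$ also vary with $\ell$, and after the Skorokhod representation there is no common filtration with respect to which all of them, together with the limit integrand, are simultaneously adapted Brownian motions and martingales.

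The paper's proof of Theorem \ref{main_0} avoids this issue entirely, and this is the idea your proposal misses: for weak convergence one never needs to realize the limit as a stochastic integral against a prescribed Brownian motion. In Lemma \ref{Lem_2}(v) the paper only identifies the quadratic variation, $\langle \widehat{X}^{(*)}\rangle_t = \int_0^t \sigma(\widehat{X}^{(*)}_s)^2\,\rd s$; this involves ordinary Lebesgue-in-time integrals, so a.e.\ convergence off $D(\sigma)$, the occupation-time argument showing the limit spends zero time in $D(\sigma)$, and uniform integrability suffice. It then invokes the martingale representation theorem to produce a Brownian motion $\widetilde{B}^{(*)}$ on an \emph{extension} of the probability space such that $(\widehat{X}^{(*)},\widetilde{B}^{(*)})$ is a weak non-sticky solution, and uniqueness in law gives $\widehat{\e}[f(\widehat{X}^{(*)})]=\e[f(X)]$. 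If you insist on your direct stochastic-integral identification, you would need to additionally assume $\sigma$ continuous, proving strictly less than the stated theorem; as written, your identification step is the one step of the argument that does not go through under Assumption \ref{Ass_1} alone.
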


If $\sigma$ is continuous and the pathwise uniqueness holds for $X$, then we have the strong convergence for the Euler--Maruyama scheme.

\begin{Thm}\label{main_1}
	Suppose that Assumption \ref{Ass_1} holds and $\sigma$ is continuous.
	Let $X=(X_t)_{t \in [0,T]}$ be a solution of SDE \eqref{SDE_1} with non-sticky condition \eqref{boundary_0} and $\{X^{(n)}\}_{n \in \n}$ be the Euler--Maruyama scheme for $X$ defined by \eqref{EM_0}.
	
	\begin{itemize}
		\item[(i)]
		If the pathwise uniqueness holds for $X$, then for any $p \in [1,\infty)$,
		\begin{align*}
			\lim_{n \to \infty}
				\e\left[
					\sup_{0 \leq t \leq T}
						\left|
							X_t
							-
							X_t^{(n)}
						\right|^p
				\right]
			=
			0.
		\end{align*}
		\item[(ii)]
		Suppose that $\p(|X_t|=|X_t'|,~\forall t\geq 0)=1$, for any the other solution $X'$ of SDE \eqref{SDE_1} driven by the same Brownian motion, with non-sticky condition \eqref{boundary_0}.
		Then for any $p \in [1,\infty)$,
		\begin{align*}
			\lim_{n \to \infty}
				\e\left[
					\sup_{0 \leq t \leq T}
						\left|
							|X_t|
							-
							|X_t^{(n)}|
						\right|^p
				\right]
			=
			0.
		\end{align*}
	\end{itemize}
\end{Thm}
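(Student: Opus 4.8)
The plan is to establish the strong convergence by combining tightness of the family $\{X^{(n)}\}$ with the two uniqueness hypotheses, following the Skorokhod-type argument referenced in the introduction. First I would prove tightness of the laws of $\{X^{(n)}\}_{n\in\n}$ on $C[0,T]$. This follows from standard moment estimates for the Euler--Maruyama scheme under the linear growth bound in Assumption \ref{Ass_1}(ii): using the Burkholder--Davis--Gundy inequality together with Gronwall's lemma one obtains $\sup_n \e[\sup_{0\le t\le T}|X_t^{(n)}|^p]<\infty$ for every $p\ge 1$, and a similar estimate on increments $\e[|X_t^{(n)}-X_s^{(n)}|^p]\le C_p|t-s|^{p/2}$ gives Kolmogorov-type tightness. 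These a priori bounds are presumably the content of the ``auxiliary estimates'' promised in subsection \ref{sub_sec_2_3}.

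Next, I would extract a convergent subsequence. By tightness together with Prokhorov's theorem and the Skorokhod representation theorem, along any subsequence there is a further subsequence and a common probability space on which $(X^{(n_k)},W)$ converges almost surely in $C[0,T]\times C[0,T]$ to some limit $(\widetilde X,\widetilde W)$, with $\widetilde W$ a Brownian motion. The key step is to identify the limit: one must show $\widetilde X$ solves SDE \eqref{SDE_1} driven by $\widetilde W$ and satisfies the non-sticky condition \eqref{boundary_0}. Passing the stochastic integral $\int_0^t \sigma(X^{(n_k)}_{\eta_{n_k}(s)})\rd \widetilde W_s$ to the limit requires continuity of $\sigma$ (available in Theorem \ref{main_1}) and control of the time-discretization $\eta_{n_k}(s)\to s$; a limit theorem for stochastic integrals in the spirit of Yan \cite{Yan} handles this. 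Crucially, Lemma \ref{Lem_2}(iv), flagged in Remark \ref{Rem_0}(ii), is exactly the tool that forces the almost-sure limit to inherit the non-sticky condition from Assumption \ref{Ass_1}(i), so $\widetilde X$ is a genuine non-sticky solution.

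For part (i), I would now invoke uniqueness. Weak existence plus pathwise uniqueness yields, by the Yamada--Watanabe theorem, uniqueness in law and existence of a strong solution; since $X$ is the unique such solution driven by $W$, every subsequential limit coincides in law with $X$, and moreover pathwise uniqueness upgrades the convergence from distributional to the original probability space. A standard subsequence argument then promotes convergence along subsequences to convergence of the whole sequence. To obtain $L^p$-$\sup$ convergence rather than mere convergence in probability, I would combine the established convergence in probability of $\sup_{0\le t\le T}|X_t-X^{(n)}_t|$ with uniform integrability, which is guaranteed by the uniform moment bounds $\sup_n \e[\sup_t |X^{(n)}_t|^q]<\infty$ for $q>p$ obtained in the tightness step. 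For part (ii), the hypothesis is weaker: one only controls $|X'|$ for solutions $X'$ sharing the Brownian motion $W$, as in the Manabe--Shiga and Bass--Burdzy--Chen setting where the sign of the solution is not determined by $W$. Here I would apply the same scheme to the reflected or absolute-value process: each subsequential limit $\widetilde X$ satisfies $|\widetilde X_t|=|X_t|$ for all $t$ almost surely by the pathwise uniqueness of $|\cdot|$, and transferring this back yields convergence of $\sup_{0\le t\le T}\big||X_t|-|X^{(n)}_t|\big|$ in $L^p$ by the same uniform-integrability argument.

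The main obstacle I anticipate is the identification of the limit, specifically verifying that the stochastic integral passes to the limit when $\sigma$ is only continuous with degenerate zeros and that the non-sticky property survives. Ordinary weak-convergence machinery identifies the limit of $\int_0^t\sigma(X^{(n_k)}_{\eta_{n_k}(s)})\rd\widetilde W_s$ with $\int_0^t\sigma(\widetilde X_s)\rd\widetilde W_s$ only if one can rule out the limit process spending positive time at the zero set of $\sigma$, since near $Z(\sigma)$ the coefficient is delicate; this is precisely why Lemma \ref{Lem_2} and Assumption \ref{Ass_1}(i) are indispensable, and the careful occupation-time computation there—rather than the routine moment and uniqueness arguments—is where the real work lies.
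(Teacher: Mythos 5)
Your overall strategy (tightness, Skorokhod representation, identification of the limit as a non-sticky solution via the occupation-time estimates, then uniqueness plus uniform integrability and a subsequence principle) mirrors the paper's, but there is a genuine gap at the single step that carries all the weight: the passage from distributional convergence to convergence on the original probability space. You apply the Skorokhod representation only to the pair $(X^{(n_k)},W)$. After representation, the limit $\widetilde X$ and the given solution $X$ live on \emph{different} probability spaces, so pathwise uniqueness cannot be ``invoked'' to compare them: pathwise uniqueness compares two solutions driven by the same Brownian motion on the same space. Your sentence ``pathwise uniqueness upgrades the convergence from distributional to the original probability space'' is precisely the assertion that needs proof, and with only the pair in hand it cannot be completed: the strong-solution map $F$ with $X=F(W)$ is merely measurable, not continuous, so joint convergence in law of $(X^{(n_k)},W)$ to $(F(\widetilde W),\widetilde W)$ does not yield $(X^{(n_k)},X)\Rightarrow(X,X)$, which is what convergence in probability of $X^{(n_k)}-X$ requires. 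The same objection applies verbatim to your part (ii), where you write $|\widetilde X_t|=|X_t|$ although the two processes are defined on different spaces; moreover your appeal to Yamada--Watanabe is delicate here, since pathwise uniqueness is assumed only within the restricted class of non-sticky solutions, and the paper never needs it for Theorem \ref{main_1}.

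The paper closes this gap by a device your proposal omits: it carries the \emph{triple} $(X,X^{(n_k)},W)$ through the tightness and Skorokhod-representation argument (Lemma \ref{Lem_2} (i)--(ii)). On the new space one then has a copy $\widehat X$ of the original solution together with the limit $\widehat X^{(*)}$ of the scheme, and Lemma \ref{Lem_2} (iv) and (vi) show that both are non-sticky solutions of \eqref{SDE_1} driven by the \emph{same} Brownian motion $\widehat W$; pathwise uniqueness, applied on the new space where it is meaningful, gives $\widehat X=\widehat X^{(*)}$. Uniform integrability from Lemma \ref{Lem_0} together with equality of laws then transfers
$\widehat{\e}[\sup_{0\leq t\leq T}|\widehat X^{k_\ell}_t-\widehat X^{(k_\ell)}_t|^p]\to 0$
back to
$\e[\sup_{0\leq t\leq T}|X_t-X^{(n_{k_\ell})}_t|^p]\to 0$,
and the subsequence-of-subsequence principle (which you do state) finishes the proof; the paper phrases this as a contradiction argument in the style of Kaneko--Nakao, but that framing is cosmetic. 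In short, your outline names all the right ingredients, but without putting $X$ itself into the tuple that undergoes Skorokhod representation, the key uniqueness step has no two objects on a common space to which it can be applied.
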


\subsection{Examples and applications}

As examples of Theorem \ref{main_1}, we have two corollaries.

The first example is an application of a result in \cite{BaBuCh}.

\begin{Cor}\label{Cor_1}
	Let $\sigma(x)=|x|^{\alpha}$, $\alpha \in (0,1/2)$ and $X=(X_t)_{0\leq t \leq T}$ be a solution of SDE \eqref{SDE_1} with non-sticky boundary condition $\e[\int_{0}^{T}\1_{\{0\}}(X_s)\rd s]=0$, and $\{X^{(n)}\}_{n \in \n}$ be the Euler--Maruyama scheme for $X$ defined by \eqref{EM_0}.
	Then for any $p \in [1,\infty)$,
	\begin{align*}
		\lim_{n \to \infty}
		\e\left[
			\sup_{0 \leq t \leq T}
				\left|
					X_t
					-
					X_t^{(n)}
				\right|^p
		\right]
		=
		0.
	\end{align*}
\end{Cor}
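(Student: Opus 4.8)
The plan is to verify that the specific case $\sigma(x) = |x|^{\alpha}$ with $\alpha \in (0,1/2)$ satisfies all the hypotheses of Theorem \ref{main_1}, and then invoke that theorem directly. This reduces the corollary to a hypothesis-checking exercise together with citing the existence/uniqueness result of \cite{BaBuCh}.

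First I would confirm Assumption \ref{Ass_1}. For part (i), note that $Z(\sigma) = \{0\}$, so the only point to check is $z=0$. Since $\sigma(y)^{-2} = |y|^{-2\alpha}$ and $2\alpha < 1$, the integral $\int_{-\varepsilon}^{\varepsilon} |y|^{-2\alpha}\,\rd y = \frac{2}{1-2\alpha}\varepsilon^{1-2\alpha}$ is finite for every $\varepsilon>0$ and tends to $0$ as $\varepsilon \searrow 0$, which is exactly condition (i). For part (ii), the function $\sigma(x) = |x|^{\alpha}$ is continuous everywhere (so $D(\sigma) = \emptyset$ and the condition on $\sigma_1$ is vacuous), and it has linear growth: $|x|^{\alpha} \leq 1 + |x|$ for all $x$ when $\alpha \in (0,1)$, giving the bound with $K=1$. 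Thus Assumption \ref{Ass_1} holds and $\sigma$ is continuous, matching the standing hypotheses of Theorem \ref{main_1}.

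Next I would supply the pathwise uniqueness required by Theorem \ref{main_1}(i). This is the one ingredient that is not an elementary computation, and it is precisely the content imported from \cite{BaBuCh}: for $\sigma(x) = |x|^{\alpha}$ with $\alpha \in (0,1/2)$, a strong solution of SDE \eqref{SDE_1} with the non-sticky boundary condition $\e[\int_0^T \1_{\{0\}}(X_s)\,\rd s] = 0$ exists and the pathwise uniqueness holds. Once this is cited, every hypothesis of Theorem \ref{main_1}(i) is in place.

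With all hypotheses verified, the conclusion $\lim_{n\to\infty}\e[\sup_{0\leq t\leq T}|X_t - X_t^{(n)}|^p] = 0$ for every $p \in [1,\infty)$ follows immediately from Theorem \ref{main_1}(i). The main (and essentially only) obstacle is the pathwise uniqueness claim, which is genuinely deep and relies on the excursion-theoretic and pseudo-strong Markov arguments of \cite{BaBuCh} rather than on anything developed in this paper; the remaining verifications are routine once the integrability exponent $2\alpha < 1$ is exploited. I would therefore keep the proof short, stating the two hypothesis checks explicitly and attributing pathwise uniqueness to \cite{BaBuCh}.
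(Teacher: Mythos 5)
Your proposal is correct and follows essentially the same route as the paper's own proof: verify Assumption \ref{Ass_1} via the explicit computation $\int_{-\varepsilon}^{\varepsilon}|y|^{-2\alpha}\,\rd y = \frac{2\varepsilon^{1-2\alpha}}{1-2\alpha}\to 0$, import pathwise uniqueness from Theorem 1.2 of \cite{BaBuCh}, and apply Theorem \ref{main_1}(i). The only (harmless) difference is that you spell out the verification of Assumption \ref{Ass_1}(ii) (linear growth and continuity, so $D(\sigma)=\emptyset$), which the paper leaves implicit.
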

\begin{proof}
	From Theorem 1.2 in \cite{BaBuCh}, the pathwise uniqueness holds for SDE $\rd X_t=|X_t|^{\alpha} \rd W_t$, $X_0=x_0 \in \real$ with non-sticky boundary condition.
	On the other hand, since $Z(\sigma)=\{0\}$ and $\alpha \in (0,1/2)$, it holds that
	\begin{align*}
		\lim_{\varepsilon \searrow 0}
		\int_{-\varepsilon}^{\varepsilon}
		\frac{1}{|y|^{2\alpha}}
		\rd y
		=
		\lim_{\varepsilon \searrow 0}
		\frac{2\varepsilon^{1-2\alpha}}{1-2\alpha}
		=0.
	\end{align*}
	Hence $\sigma(x)=|x|^{\alpha}$ satisfies Assumption \ref{Ass_1}.
	From Theorem \ref{main_1}, we conclude the statement.
\end{proof}

We give a financial application of SDE considered in Corollary \ref{Cor_1}.
In mathematical finance, constant elasticity of variance (CEV) models introduced by Cox \cite{Cox96}
\begin{align*}
	\rd X_t=(X_t)^{\alpha} \rd W_t,~
	X_0=x_0>0,~\alpha \in (0,1],
\end{align*}
have been studied by many authors (see, e.g. \cite{AnKoSp15}, \cite{DeSh02}, \cite{GuHoJa18}, \cite{Is09} and \cite{JeYoCh09}).
If $\alpha \in [1/2,1]$, then as mentioned in the introduction, pathwise uniqueness holds (see, Theorem 1 in \cite{YaWa} or Proposition 5.2.13 in \cite{KS}).
Moreover, the boundary point zero is absorbing, that is, the process remains at zero after it reaches zero (see, e.g. Proposition 6.1.3.1 in \cite{JeYoCh09}).

On the other hand, if $\alpha \in (0,1/2)$, then the pathwise uniqueness does not hold for CEV models, and the boundary zero is regular, that is, the solution can get in to the boundary zero and can get out from the boundary zero, (see, e.g. \cite{BoSa} or Example 5.4 in \cite{Du96}).
Therefore one may consider CEV models with absorbing boundary (see, \cite{DeSh02}), or reflecting boundary by setting $X_t:=(1-\alpha)^{\frac{1}{1-\alpha}}(\rho_t)^{\frac{1}{2(1-\alpha)}}$, where $\rho=(\rho_t)_{t \geq 0}$ be a $(1-2\alpha)/(1-\alpha)$-dimensional squared Bessel process (see, the explicit form of the density function given in \cite{JeYoCh09}, page 367, case $\beta(=\alpha-1)<0$).

Recently, there are some studies on CEV models $\rd X_t=|X_t|^{\alpha} \rd W_t$ with ``free boundary condition" (see, e.g. subsection 2.2 in \cite{AnKoSp15}) to extend them as $\real$-valued processes.
However, as mentioned in the introduction, the uniqueness in law and pathwise uniqueness do not hold (in particular, there is no density function) without some boundary conditions.
Therefore, if one would like to extend CEV models as $\real$-valued processes, then as one approach, the non-sticky boundary is useful.

Finally, we give a relation between CEV model with non-sticky boundary condition and squared Bessel process.
Let $X=(X_t)_{t \in [0,T]}$ be a solution of SDE $\rd X_t=|X_t|^{\alpha} \rd W_t$, $X_0=x_0 \in \real$, for $\alpha \in (0,1/2)$.
We first do not assume any boundary condition for $X$.
Let $g(x):=|x|^{2(1-\alpha)}/(1-\alpha)^2$.
Then it is easy to see that
\begin{align*}
	g'(x)
	=\frac{2\mathrm{sgn}(x)}{1-\alpha}|x|^{1-2\alpha},~
	\forall x \in \real
	\quad\text{and}\quad
	g''(x)
	=\frac{2(1-2\alpha)}{1-\alpha}
	\frac{1}{|x|^{2\alpha}},~
	\forall x \in \real \setminus \{0\}.
\end{align*}
So we cannot apply It\^o's formula for $g$, but since $g$ is convex and $X$ is a continuous martingale, we can apply It\^o--Tanaka formula (see, e.g. Theorem 1.5 in chapter VI of \cite{ReYo99}) to obtain
\begin{align*}
	Y_t
	:=g(X_t)
	=
	g(x_0)
	+
	\int_{0}^{t}
		g'(X_s)
	\rd X_s
	+
	\frac{1}{2}
	\int_{\real}
		L_t^x(X)
	g''(\rd x),
\end{align*}
where $g''(\rd x)$ is the second derivative measure of $g$, and is given by
\begin{align*}
	g''(\rd x)
	=
	\frac{2(1-2\alpha)}{1-\alpha}
	\frac{\1_{\real \setminus \{0\}}(x)}{|x|^{2\alpha}}
	\rd x.
\end{align*}
Therefore, the occupation time formula (see, e.g. Corollary 1.6 in chapter VI of \cite{ReYo99}), we have
\begin{align*}
	Y_t
	&
	=
	g(x_0)
	+
	2
	\int_{0}^{t}
		\sqrt{Y_s} \mathrm{sgn}(X_s)
	\rd W_s
	+
	\frac{1-2\alpha}{1-\alpha}
	\int_{0}^{t}
		\frac
			{\1_{\real \setminus \{0\}}(X_s)}
			{|X_s|^{2\alpha}}
	\rd \langle X \rangle_s\\
	&
	=
	g(x_0)
	+
	2
	\int_{0}^{t}
		\sqrt{Y_s} \mathrm{sgn}(X_s)
	\rd W_s
	+
	\frac{1-2\alpha}{1-\alpha}
	\int_{0}^{t}
		\1_{\real \setminus \{0\}}(X_s)
	\rd s.
\end{align*}
We now assume non-sticky boundary condition for $X$, then $\1_{\real \setminus \{0\}}(X_s)=1$ for all $s \in [0,T]$, almost surely and thus
\begin{align*}
	Y_t
	=
	g(x_0)
	+
	2
	\int_{0}^{t}
		\sqrt{Y_s}
	\rd \widetilde{W}_s
	+
	\frac{1-2\alpha}{1-\alpha}t,
\end{align*}
where $\widetilde{W}=(\widetilde{W}_t)_{t \in [0,T]}$ is a Brownian motion defined by
$\rd \widetilde{W}_t:= \mathrm{sgn}(X_t)\rd W_t$.
Therefore, the law of $Y$ is a $(1-2\alpha)/(1-\alpha)$-dimensional squared Bessel process.

\begin{Rem}
	Note that one may use It\^o's formula for $Y_t=|X_t|^{2(1-\alpha)}/(1-\alpha)^2$ for ``some" $t \geq 0$, in order to prove $Y$ satisfies the equation $\rd Y_t=2\sqrt{Y_t} \rd W_t +\frac{1-2\alpha}{1-\alpha} \rd t$, (see, e.g. \cite{Is09} and \cite{GuHoJa18}).
	The above computation shows that this is true for $t <\inf\{s>0;X_s=0\}$.
\end{Rem}

The second example is an application of a result in \cite{MaShi73}.

\begin{Cor}
	Let $Z(\sigma)=\{0\}$, $X=(X_t)_{t \in [0,T]}$ be a solution of SDE \eqref{SDE_1} with non-sticky boundary condition $\e[\int_{0}^{T}\1_{\{0\}}(X_s)\rd s]=0$, and $\{X^{(n)}\}_{n \in \n}$ be the Euler--Maruyama scheme for $X$ defined by \eqref{SDE_1}.
	Suppose that $\sigma:\real \to \real$ satisfies Assumption \ref{Ass_1}, and is a bounded, continuous and odd function and continuously differentiable on $\real \setminus \{0\}$ such that the limit $\lim_{x \searrow 0}x a'(x) a(x)^{-1}$ exists and is not $1/2$.
	Then for any $p \in [1,\infty)$,
	\begin{align*}
		\lim_{n \to \infty}
		\e
		\left[
			\sup_{0 \leq t \leq T}
				\left|
					|X_t|
					-
					|X_t^{(n)}|
				\right|^p
			\right]
		=
		0.
	\end{align*}
\end{Cor}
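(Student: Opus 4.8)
The plan is to verify that the hypotheses of Theorem~\ref{main_1}(ii) are met and then invoke that theorem directly. The diffusion coefficient $\sigma$ here is assumed to satisfy Assumption~\ref{Ass_1} and to be bounded, continuous, odd, and continuously differentiable on $\real \setminus \{0\}$, so in particular $\sigma$ is continuous on all of $\real$, which is the standing continuity requirement of Theorem~\ref{main_1}. Thus the only substantive task is to check the almost-sure ``same-absolute-value'' property: for any two solutions $X$ and $X'$ of SDE~\eqref{SDE_1} driven by the same Brownian motion, both satisfying the non-sticky boundary condition $\e[\int_0^T \1_{\{0\}}(X_s)\,\rd s]=0$ and starting from the same initial value, one has $\p(|X_t|=|X_t'|,~\forall t\geq 0)=1$.

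First I would appeal to the result of Manabe and Shiga~\cite{MaShi73} described in the introduction. Their theorem states precisely that if $\sigma$ is bounded, continuous, odd, continuously differentiable on $\real \setminus \{0\}$, satisfies $\int_0^\delta \sigma(y)^{-2}\,\rd y < \infty$ for some $\delta>0$, and the limit $\lim_{x \searrow 0} x\,a'(x)\,a(x)^{-1}$ exists and differs from $1/2$, then two non-sticky solutions driven by the same Brownian motion with the same initial value agree in absolute value almost surely. The hypotheses of the present corollary match these conditions, with the one integrability condition $\int_0^\delta \sigma(y)^{-2}\,\rd y<\infty$ being exactly what Assumption~\ref{Ass_1}(i) delivers at the single zero $z=0$ (see Remark~\ref{Rem_0}(i), which records that Assumption~\ref{Ass_1}(i) forces $\int_{-\varepsilon(0)}^{\varepsilon(0)} \sigma(y)^{-2}\,\rd y<\infty$ for some $\varepsilon(0)>0$). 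So the key step is simply to confirm that Assumption~\ref{Ass_1}(i), together with the oddness of $\sigma$, supplies the Manabe--Shiga integrability hypothesis, and to note that the remaining analytic conditions are imposed verbatim in the statement.

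Having established the same-absolute-value property from \cite{MaShi73}, I would then observe that this is exactly the hypothesis required by Theorem~\ref{main_1}(ii). Applying that theorem yields
\begin{align*}
	\lim_{n \to \infty}
	\e\left[
		\sup_{0 \leq t \leq T}
			\left|
				|X_t|
				-
				|X_t^{(n)}|
			\right|^p
	\right]
	=
	0
\end{align*}
for every $p \in [1,\infty)$, which is the desired conclusion.

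The main obstacle, such as it is, lies in the bookkeeping of matching hypotheses rather than in any genuine analytic difficulty: one must be careful that the notation $a$ in the Manabe--Shiga condition $\lim_{x \searrow 0} x\,a'(x)\,a(x)^{-1}$ refers to the same coefficient $\sigma$ (the introduction uses $a$ and $\sigma$ interchangeably for the diffusion coefficient), and that oddness of $\sigma$ is what allows the conclusion to be stated symmetrically in $|X_t|$. Beyond verifying that every condition of \cite{MaShi73} is present and that $\sigma$ is continuous so Theorem~\ref{main_1} applies, the proof is a direct composition of the two cited results and requires no further estimates.
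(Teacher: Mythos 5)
Your proposal is correct and follows essentially the same route as the paper: deduce the integrability condition $\int_{0}^{\delta}\sigma(y)^{-2}\,\rd y<\infty$ from Assumption \ref{Ass_1}, invoke Theorem 1 of \cite{MaShi73} to obtain the same-absolute-value hypothesis, and then apply Theorem \ref{main_1}(ii). You in fact source the integrability from the correct part, Assumption \ref{Ass_1}(i) (via Remark \ref{Rem_0}(i)), whereas the paper's proof cites part (ii), which appears to be a typo.
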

\begin{proof}
	From Assumption \ref{Ass_1} (ii), there exists $\delta>0$ such that $\int_{0}^{\delta}\sigma(y)^{-2} \rd y<\infty$.
	Hence it follows from Theorem 1 in \cite{MaShi73} that the assumptions on Theorem \ref{main_1} hold.
	Thus we conclude the proof.
\end{proof}

\subsection{Auxiliary estimates}\label{sub_sec_2_3}

In this subsection, we introduce some useful estimates for proving Theorem \ref{main_0} and Theorem \ref{main_1}.

We first prove the following standard inequalities on a solution of SDE \eqref{SDE_1}, the Euler--Maruyama scheme defined by \eqref{EM_0} and their local times.

\begin{Lem}\label{Lem_0}
	Let $X$ be a solution of SDE \eqref{SDE_1} and $\{X^{(n)}\}_{n \in \n}$ be the Euler--Maruyama scheme for $X$ defined by \eqref{EM_0}.
	Suppose that $\sigma$ is of linear growth.
	Then for any $p \geq 1$, there exists a positive constant $C_p>0$ such that
	\begin{align}
		\e\left[
			\sup_{0 \leq t \leq T}
			|X_t|^p
		\right]
		+
		\sup_{n \in \n}
		\e\left[
			\sup_{0 \leq t \leq T}
			|X_t^{(n)}|^p
		\right]
		&
		\leq
		C_p,
		\label{Lem_0_1}\\
		\e[|X_t-X_s|^{p}]^{1/p}
		+
		\sup_{n \in \n}
		\e[|X_t^{(n)}-X_s^{(n)}|^{p}]^{1/p}
		&
		\leq
		C_p
		|t-s|^{1/2},
		~\text{for any}~t,s \in [0,T].
		\label{Lem_0_3}
	\end{align}
	Moreover, there exists $C_0>0$ such that
	\begin{align}
		\sup_{y \in \real}
		\e[L_T^{y}(X)]
		+
		\sup_{y \in \real,~n \in \n}
		\e[L_T^{y}(X^{(n)})]
		&\leq
		C_0
		\label{Lem_0_2}.
	\end{align}
\end{Lem}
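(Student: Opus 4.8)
The plan is to establish the three estimates by the classical localization--BDG--Gronwall machinery, treating the solution $X$ and the Euler--Maruyama scheme $X^{(n)}$ in parallel, with the only real subtlety being to keep every constant independent of the discretization level $n$. Both $X$ and $X^{(n)}$ are continuous local martingales, so to prove \eqref{Lem_0_1} I would introduce a localizing sequence $\tau_m:=\inf\{t\geq 0 : |X_t|\geq m\}$ (and its analogue for $X^{(n)}$) to legitimize all manipulations, passing to the limit at the end by Fatou's lemma. It suffices to treat $p\geq 2$, since the range $1\leq p<2$ then follows from Jensen's (Lyapunov's) inequality. Applying the Burkholder--Davis--Gundy inequality to the martingale part, together with the linear-growth bound $\sigma(x)^2\leq 2K^2(1+|x|^2)$ and Hölder's inequality in time, I would arrive at
$\e[\sup_{s\leq t}|X_{s\wedge\tau_m}|^p]\leq C_p(1+|x_0|^p)+C_p\int_0^t \e[\sup_{u\leq s}|X_{u\wedge\tau_m}|^p]\,\mathrm{d}s$,
and Gronwall's lemma closes this uniformly in $m$. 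For the scheme the same computation applies once $\sigma(X_s)$ is replaced by $\sigma(X^{(n)}_{\eta_n(s)})$ and one uses the pointwise domination $|X^{(n)}_{\eta_n(s)}|\leq \sup_{u\leq s}|X^{(n)}_u|$, so that Gronwall produces a constant independent of $n$; boundedness of $\{x_n\}$ (which holds since $x_n\to x_0$) absorbs the initial term.

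For \eqref{Lem_0_3} I would write $X_t-X_s=\int_s^t \sigma(X_u)\,\mathrm{d}W_u$ and apply BDG once more, giving $\e[|X_t-X_s|^p]\leq C_p\,\e[(\int_s^t \sigma(X_u)^2\,\mathrm{d}u)^{p/2}]$. Hölder's inequality in time yields $(\int_s^t \sigma(X_u)^2\,\mathrm{d}u)^{p/2}\leq |t-s|^{p/2-1}\int_s^t \sigma(X_u)^p\,\mathrm{d}u$ for $p\geq 2$, and taking expectations and invoking linear growth together with the already-established \eqref{Lem_0_1} bounds $\e[\sigma(X_u)^p]$ uniformly in $u$. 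This gives $\e[|X_t-X_s|^p]\leq C_p|t-s|^{p/2}$, i.e.\ the asserted $1/2$-Hölder control in $L^p$. The identical argument with $\sigma(X^{(n)}_{\eta_n(u)})$ handles the scheme with $n$-independent constants, and again $1\leq p<2$ follows by Jensen.

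For the local-time bound \eqref{Lem_0_2} I would invoke the symmetric Tanaka--Meyer formula, which for the continuous martingale $X$ reads $|X_T-y|=|x_0-y|+\int_0^T \mathrm{sgn}(X_s-y)\,\mathrm{d}X_s+L_T^y(X)$. Since $|\mathrm{sgn}|\leq 1$ and $\sigma$ is of linear growth, \eqref{Lem_0_1} shows $\e[\int_0^T \sigma(X_s)^2\,\mathrm{d}s]<\infty$, so the stochastic integral is a genuine martingale with zero mean. Taking expectations and using the triangle inequality $\big||X_T-y|-|x_0-y|\big|\leq |X_T-x_0|$ produces the crucial $y$-uniform bound $\e[L_T^y(X)]=\e[|X_T-y|-|x_0-y|]\leq \e[|X_T-x_0|]\leq \e[|X_T|]+|x_0|$, which is controlled by \eqref{Lem_0_1}. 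The same computation for $X^{(n)}$, now with initial point $x_n$, gives $\e[L_T^y(X^{(n)})]\leq \e[|X^{(n)}_T|]+|x_n|$, bounded uniformly in both $y$ and $n$ by \eqref{Lem_0_1} and the convergence $x_n\to x_0$.

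Every step is routine, and the only point demanding genuine care --- and the one I would regard as the main (if mild) obstacle --- is guaranteeing that the Gronwall constant in \eqref{Lem_0_1}, and consequently the constants in \eqref{Lem_0_3} and \eqref{Lem_0_2}, are independent of the discretization level $n$. This is precisely what makes the suprema over $n$ in the statement finite, and it rests on the domination $|X^{(n)}_{\eta_n(s)}|\leq \sup_{u\leq s}|X^{(n)}_u|$ together with the uniform boundedness of the approximating initial data $\{x_n\}_{n\in\n}$.
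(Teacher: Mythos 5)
Your proposal is correct and follows essentially the same route as the paper: the moment bound \eqref{Lem_0_1} and the increment bound \eqref{Lem_0_3} via Burkholder--Davis--Gundy, linear growth and Gronwall (exactly the argument the paper omits as standard, with the same key point that boundedness of $\{x_n\}_{n\in\n}$ and the domination $|X^{(n)}_{\eta_n(s)}|\leq\sup_{u\leq s}|X^{(n)}_u|$ make the constants uniform in $n$), and the It\^o--Tanaka formula for the local-time bound \eqref{Lem_0_2}. The only cosmetic difference is that for \eqref{Lem_0_2} you take expectations and use that the stochastic integral is a zero-mean square-integrable martingale, whereas the paper keeps its absolute value and bounds its expectation by Burkholder--Davis--Gundy; both rest on the same $L^2$ estimate from \eqref{Lem_0_1}.
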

\begin{proof}
	Since $\{x_n\}_{n \in \n}$ is bounded and $\sigma$ is of linear growth, the estimates \eqref{Lem_0_1} and \eqref{Lem_0_3} can be shown by applying Gronwall's inequality and Burkholder-Davis-Gundy's inequality, thus it will be omitted.
	
	We prove \eqref{Lem_0_2}.
	By It\^o--Tanaka formula, we have for any $y \in \real$,
	\begin{align*}
		L_T^y(X)
		&=
		|X_T-y|
		-|x_0-y|
		-\int_{0}^{T}
			\mbox{sgn}(X_s-y)
		\rd X_s\\
		&
		\leq
		|x_0|
		+|X_T|
		+
		\left|
			\int_{0}^{T}
			\mbox{sgn}(X_s-y)
			\sigma(X_s)
			\rd W_s
		\right|
	\end{align*}
	and by the same way
	\begin{align*}
		L_T^y(X^{(n)})
		\leq
		|x_n|
		+|X_T^{(n)}|
		+
		\left|
		\int_{0}^{T}
			\mbox{sgn}(X_s^{(n)}-y)
			\sigma(X_{\eta_n(s)}^{(n)})
		\rd W_s
		\right|.
	\end{align*}
	Hence by using Burkholder-Davis-Gundy's inequality and \eqref{Lem_0_1} with $p=1,2$, we conclude \eqref{Lem_0_2}.
\end{proof}

The following lemma is a key estimate for the non-sticky condition.

\begin{Lem}\label{Lem_1}
	Suppose that Assumption \ref{Ass_1} hold.
	Let $X$ be a solution of SDE \eqref{SDE_1} with non-sticky condition \eqref{boundary_0} and $\{X^{(n)}\}_{n \in \n}$ be the Euler--Maruyama scheme for $X$ defined by \eqref{EM_0}.
	Let $z \in Z(\sigma)$ and $f_z:\real \to [0,1]$ be a Lipschitz continuous function with $\mathrm{supp}f_z \subset [z-\varepsilon, z+\varepsilon]$ for some $\varepsilon>0$.
	Then there exists $C>0$ which does not depend on $n$, $z$ and $\varepsilon$ such that
	\begin{align}\label{Lem_1_1}
		\e\left[
			\int_{0}^{T}
				f_z(X_s)
			\rd s
		\right]
		\leq
		C
		\int_{-\varepsilon}^{\varepsilon}
			\frac{1}{\sigma(z+y)^2}
		\rd y
	\end{align}
	and
	\begin{align}\label{Lem_1_2}
		\e\left[
			\int_{0}^{T}
				f_z(X_s^{(n)})
			\rd s
		\right]
		\leq
		C
		\left\{
			\int_{-\varepsilon}^{\varepsilon}
				\frac{1}{\sigma(z+y)^2}
			\rd y
			+
			\frac{\|f_z\|_{\mathrm{Lip}}}{n^{1/2}}
		\right\}.
	\end{align}
\end{Lem}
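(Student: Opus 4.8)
The plan is to derive both \eqref{Lem_1_1} and \eqref{Lem_1_2} from the occupation time formula together with the uniform local time bound \eqref{Lem_0_2}. The underlying mechanism is that if a continuous semi-martingale $Y$ has $\rd\langle Y\rangle_s=\theta_s^2\,\rd s$ with $\theta_s\ne 0$ for a.e. $s$, then $\int_0^T g(Y_s)\,\rd s=\int_0^T g(Y_s)\theta_s^{-2}\,\rd\langle Y\rangle_s$, and the occupation time formula rewrites the latter as a space integral of $g/\theta^2$ against $L^\bullet(Y)$. For the solution $X$ one has $\theta_s^2=\sigma(X_s)^2$, so the argument is immediate; for the scheme the bracket density is the \emph{frozen} quantity $\sigma(X^{(n)}_{\eta_n(s)})^2$, and reconciling this mismatch is where the real work lies.

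First I would prove \eqref{Lem_1_1}. Because $X$ obeys the non-sticky condition \eqref{boundary_0}, $\sigma(X_s)^2>0$ holds $\mathrm{Leb}\otimes\p$-a.e., so using $\rd\langle X\rangle_s=\sigma(X_s)^2\,\rd s$ and the occupation time formula,
\[
  \int_0^T f_z(X_s)\,\rd s
  =\int_0^T\frac{f_z(X_s)}{\sigma(X_s)^2}\,\rd\langle X\rangle_s
  =\int_\real\frac{f_z(a)}{\sigma(a)^2}L_T^a(X)\,\rd a .
\]
Taking expectations, bounding $\e[L_T^a(X)]\le C_0$ by \eqref{Lem_0_2}, and using $0\le f_z\le\1_{[z-\varepsilon,z+\varepsilon]}$ gives
\[
  \e\!\left[\int_0^T f_z(X_s)\,\rd s\right]
  \le C_0\int_{z-\varepsilon}^{z+\varepsilon}\frac{f_z(a)}{\sigma(a)^2}\,\rd a
  \le C_0\int_{-\varepsilon}^{\varepsilon}\frac{1}{\sigma(z+y)^2}\,\rd y,
\]
which is \eqref{Lem_1_1} with $C=C_0$.

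For \eqref{Lem_1_2} I would first split off the discretization error via the Lipschitz property of $f_z$: since $f_z(X_s^{(n)})\le f_z(X_{\eta_n(s)}^{(n)})+\|f_z\|_{\mathrm{Lip}}|X_s^{(n)}-X_{\eta_n(s)}^{(n)}|$, the estimate \eqref{Lem_0_3} together with $|s-\eta_n(s)|\le T/n$ yields $\e[\int_0^T|X_s^{(n)}-X_{\eta_n(s)}^{(n)}|\,\rd s]\le C n^{-1/2}$, which already produces the $\|f_z\|_{\mathrm{Lip}}/n^{1/2}$ term. It then remains to bound the \emph{frozen} occupation $M_n:=\e[\int_0^T f_z(X_{\eta_n(s)}^{(n)})\,\rd s]$ by $C\int_{-\varepsilon}^{\varepsilon}\sigma(z+y)^{-2}\,\rd y$.

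This is the main obstacle. The naive imitation of the continuous case --- writing $f_z(X_{\eta_n(s)}^{(n)})=\Phi(X_{\eta_n(s)}^{(n)})\sigma(X_{\eta_n(s)}^{(n)})^2$ with $\Phi:=f_z/\sigma^2$, so that $M_n=\e[\int_0^T\Phi(X_{\eta_n(s)}^{(n)})\,\rd\langle X^{(n)}\rangle_s]$ --- fails, because the occupation time formula needs the integrand evaluated at $X_s^{(n)}$, not at $X_{\eta_n(s)}^{(n)}$, and the difference cannot be absorbed by a Lipschitz bound since $\Phi$ blows up on $Z(\sigma)$. My plan to circumvent this is to use the block structure: with $\xi_k:=X_{t_k^{(n)}}^{(n)}$ and $\tau:=T/n$, on $[t_k^{(n)},t_{k+1}^{(n)})$ the scheme is, conditionally on $\mathcal F_{t_k^{(n)}}$, a Brownian motion with constant nonzero volatility $\sigma(\xi_k)$ (nonzero by Lemma \ref{Lem_EM_0}), whence $M_n=\tau\sum_{k=0}^{n-1}\e[f_z(\xi_k)]$ and each $\e[f_z(\xi_k)]$ is the integral of $f_z$ against a Gaussian mixture with kernel $(2\pi\sigma(\xi_{k-1})^2\tau)^{-1/2}e^{-(a-\xi_{k-1})^2/(2\sigma(\xi_{k-1})^2\tau)}$ (the deterministic term $k=0$ contributing only $O(n^{-1})$). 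I expect to establish the Green's-function-type bound that the resulting $T$-averaged density is $\le C/\sigma(a)^2$ near $z$ --- the discrete analogue of the continuous identity ``occupation density $=$ local time$/\sigma^2$'' --- equivalently, to relate this average to $\e[L_T^a(X^{(n)})]\le C_0$ through the conditional approximation $\e[\sigma(X_{\eta_n(s)}^{(n)})^2\mid X_s^{(n)}=a]\approx\sigma(a)^2$, the short-time discrepancy between $\sigma(\xi_k)^2$ and $\sigma(a)^2$ being controlled by the Gaussian concentration of $X_s^{(n)}$ around $\xi_k$ on a block of length $T/n$ and by Assumption \ref{Ass_1}. Making this comparison rigorous, uniformly in $n$, $z$ and $\varepsilon$, is the technical heart; integrating the resulting pointwise bound against $f_z\le\1_{[z-\varepsilon,z+\varepsilon]}$ then gives $M_n\le C\int_{-\varepsilon}^{\varepsilon}\sigma(z+y)^{-2}\,\rd y$ and hence \eqref{Lem_1_2}.
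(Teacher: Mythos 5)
Your proof of \eqref{Lem_1_1} is correct and is the paper's proof: the non-sticky condition lets you write $\rd s=\sigma(X_s)^{-2}\,\rd\langle X\rangle_s$, the occupation time formula turns the time integral into $\int_\real f_z(a)\sigma(a)^{-2}L_T^a(X)\,\rd a$, and \eqref{Lem_0_2} finishes (the paper inserts a $\1_{\{\sigma^2>1/N\}}$ truncation and Fatou's lemma; since the occupation time formula holds for any nonnegative Borel integrand this bookkeeping is inessential). Your Lipschitz splitting for \eqref{Lem_1_2}, with \eqref{Lem_0_3} producing the $\|f_z\|_{\mathrm{Lip}}n^{-1/2}$ term, is also exactly the paper's first step.

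The gap is in what remains: you never prove $M_n:=\e[\int_0^T f_z(X^{(n)}_{\eta_n(s)})\,\rd s]\le C\int_{-\varepsilon}^{\varepsilon}\sigma(z+y)^{-2}\,\rd y$. Your block-wise Gaussian-mixture / ``discrete Green's function'' program is only announced; its key step --- that the time-averaged law of the grid values has density bounded by $C/\sigma(a)^2$ near $z$, uniformly in $n$, $z$ and $\varepsilon$ --- is precisely the hard content, and you explicitly leave it unestablished, so the proposal is not a proof. For comparison, the paper finishes in a few lines: since $\rd\langle X^{(n)}\rangle_s=\sigma(X^{(n)}_{\eta_n(s)})^2\,\rd s$ and the grid values avoid $Z(\sigma)$ almost surely (Lemma \ref{Lem_EM_0}), it writes $M_n=\e\bigl[\int_0^T f_z(X^{(n)}_{\eta_n(s)})\1_{\{\sigma(X^{(n)}_{\eta_n(s)})^2>0\}}\sigma(X^{(n)}_{\eta_n(s)})^{-2}\,\rd\langle X^{(n)}\rangle_s\bigr]$ and then invokes Fatou's lemma, the occupation time formula and \eqref{Lem_0_2} to bound this by $C_0\int_{-\varepsilon}^{\varepsilon}\sigma(z+y)^{-2}\,\rd y$. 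That is, the paper performs exactly the replacement of the frozen argument $X^{(n)}_{\eta_n(s)}$ by the running argument $X^{(n)}_s$ inside $\Phi=f_z/\sigma^2$ under the bracket integral --- the step you single out as the one that ``fails,'' since the occupation time formula applies to $\int_0^T \Phi(X_s^{(n)})\,\rd\langle X^{(n)}\rangle_s$ and not to $\int_0^T \Phi(X^{(n)}_{\eta_n(s)})\,\rd\langle X^{(n)}\rangle_s$, and $\Phi$ is not Lipschitz. Your diagnosis is therefore a fair criticism of the paper's terseness at this exact point; but having rejected the paper's shortcut, you owed a complete substitute, and the sketch you give does not supply one.
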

\begin{proof}
	We first prove \eqref{Lem_1_1}.
	Since $X$ satisfies the non-sticky condition \eqref{boundary_0}, we have $\sigma(X_s(\omega))^2>0$, $\mathrm{Leb} \otimes \p$-a.e.
	Thus by using Fatou's lemma and the occupation time formula (see, e.g. Corollary 1.6 in chapter VI of \cite{ReYo99}) and Lemma \ref{Lem_0}, we have
	\begin{align*}
		\e\left[
			\int_{0}^{T}
				f_z(X_s)
			\rd s
		\right]
		&
		=
		\e\left[
			\int_{0}^{T}
				\frac{f_z(X_s)\1_{\{\sigma(X_s)^2>0\}}}{\sigma(X_s)^2}
			\rd \langle X \rangle_s
		\right]\\
		&
		\leq
		\liminf_{N \to \infty}
		\e\left[
			\int_{\real}
				\frac{f_z(y)\1_{\{\sigma(y)^2>1/N\}}}{\sigma(y)^2}
				L^y_T(X)
			\rd y
		\right]\\
		&\leq
		\sup_{y \in \real}
		\e\left[
			L^y_T(X)
		\right]
		\int_{z-\varepsilon}^{z+\varepsilon}
			\frac{1}{\sigma(y)^2}
		\rd y\\
		&\leq
		C_0
		\int_{-\varepsilon}^{\varepsilon}
		\frac{1}{\sigma(z+y)^2}
		\rd y,
	\end{align*}
	which implies \eqref{Lem_1_1}.

	Now we prove \eqref{Lem_1_2}.
	Since, from Lemma \ref{Lem_EM_0}, we have $\sigma(X_s^{(n)}(\omega))^2>0$, $\mathrm{Leb} \otimes \p$-a.e.
	Hence by using Lipschitz continuity of $f_z$, Fatou's lemma, the occupation time formula and Lemma \ref{Lem_0}, we have
	\begin{align*}
		\e\left[
			\int_{0}^{T}
				f_z(X_s^{(n)})
			\rd s
		\right]
		&\leq
		\e\left[
			\int_{0}^{T}
				f_z(X_{\eta_n(s)}^{(n)})
			\rd s
		\right]
		+
		\int_{0}^{T}
			\e\left[
					\left|
					f_z(X_s^{(n)})
					-
					f_z(X_{\eta_n(s)}^{(n)})
					\right|
			\right]
		\rd s\\
		&\leq
		\e\left[
			\int_{0}^{T}
				\frac
					{f_z(X_{\eta_n(s)}^{(n)}) \1_{\{\sigma(X_{\eta_n(s)}^{(n)})^2>0\}}}
					{\sigma(X_{\eta_n(s)}^{(n)})^2}
			\rd \langle X^{(n)} \rangle_s
		\right]
		+\|f_z\|_{\mathrm{Lip}}
		\int_{0}^{T}
			\e[|X_{s}^{(n)}-X_{\eta_n(s)}^{(n)}|]
		\rd s\\
		&\leq
		\liminf_{N \to \infty}
		\e\left[
			\int_{\real}
				\frac{f_z(y) \1_{\{\sigma(y)^2>1/N \}}}{\sigma(y)^2}
				L^y_T(X^{(n)})
			\rd y
		\right]
		+
		\frac{C_1T^{3/2}\|f_z\|_{\mathrm{Lip}}}{n^{1/2}}\\
		&\leq
		\sup_{n,\in \n,~y \in \real}
		\e\left[
			L^y_T(X^{(n)})
		\right]
		\int_{z-\varepsilon}^{z+\varepsilon}
			\frac{1}{\sigma(y)^{2}}
		\rd y
		+
		\frac{C_1T^{3/2}\|f_z\|_{\mathrm{Lip}}}{n^{1/2}}\\
		&\leq
		\max\{C_0, C_1T^{3/2}\}
		\left\{
			\int_{-\varepsilon}^{\varepsilon}
			\frac{1}{\sigma(z+y)^2}
			\rd y
			+
			\frac{\|f_z\|_{\mathrm{Lip}}}{n^{1/2}}
		\right\},
	\end{align*}
	which implies \eqref{Lem_1_2}.
\end{proof}

Now we introduce the following key lemma which is proved by Skorokhod (see, e.g. Theorem in \cite{Sk65}, Chapter 3, section 3, page 32), and which shows convergence in probability for a sequence of stochastic integrals.

\begin{Lem}[Skorokhod, \cite{Sk65}]\label{Lem_Sk}
	Let $(W,\mathcal{F}^{W})$ and $(W^{n},\mathcal{F}^{W^{n}})$, $n \in \n$ be Brownian motions and $f_n=(f_n(t))_{t \in [0,T]}$ be a $\mathcal{F}^{W^{n}}$-adapted stochastic processes such that $\int_{0}^{t} f_n(s) \rd W_s^{n}$ is well-defined, for all $n \in \n$.
	Suppose that for any $t \in [0,T]$, $W^{n}_t$ and $f_n(t)$ converges to $W_t$ and a $\mathcal{F}^{W}$-adapted process $f(t)$ in probability, respectively, and the stochastic integral $\int_{0}^{t} f(s) \rd W_s$ is well-defined.
	Suppose further that the following conditions are satisfied for $\{f_n\}_{n \in \n}$:
	\begin{itemize}
		\item[(a)]
		For any $\varepsilon>0$, there exists $K>0$ such that for any $n \in \n$,
		\begin{align*}
			\p
			\left(
				\sup_{0 \leq t \leq T}
				|f_n(t)|
				>K
			\right)
			\leq \varepsilon.
		\end{align*}
		
		\item[(b)]
		For any $\varepsilon>0$,
		\begin{align*}
			\lim_{h \searrow 0}
			\lim_{n \to \infty}
			\sup_{|t_1-t_2|\leq h}
			\p
			\left(
				|f_n(t_2)-f_n(t_1)|
				>
				\varepsilon
			\right)
			=0.
		\end{align*}
	\end{itemize}
	Then it holds that for any $t \in [0,T]$
	\begin{align*}
		\lim_{n \to \infty}
		\int_{0}^{t}
			f_n(s)
		\rd W^{n}_s
		=
		\int_{0}^{t}
			f(s)
		\rd W_s,
	\end{align*}
	in probability.
\end{Lem}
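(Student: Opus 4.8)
The plan is to reduce the statement to finite sums by approximating each integrand by a step function on a partition, and then to control the resulting errors using the two tightness-type hypotheses (a) and (b). Fix $t \in [0,T]$ and a partition $0 = s_0 < s_1 < \cdots < s_m = t$ of mesh at most $h$, write $\eta_h(s) := s_j$ for $s \in [s_j, s_{j+1})$, and set $f_n^h(s) := f_n(\eta_h(s))$ and $f^h(s) := f(\eta_h(s))$. I would then decompose
\begin{align*}
	\int_{0}^{t} f_n(s) \rd W_s^{n}
	- \int_{0}^{t} f(s) \rd W_s
	&=
	\int_{0}^{t} \left( f_n(s) - f_n^h(s) \right) \rd W_s^{n}
	+ \left( \int_{0}^{t} f_n^h(s) \rd W_s^{n} - \int_{0}^{t} f^h(s) \rd W_s \right) \\
	&\quad
	+ \int_{0}^{t} \left( f^h(s) - f(s) \right) \rd W_s
	=: I_n^h + II_n^h + III^h,
\end{align*}
and estimate the three terms in the order $II$, $III$, $I$.

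The term $II_n^h$ is a difference of finite sums, namely $II_n^h = \sum_{j=0}^{m-1} f_n(s_j)(W_{s_{j+1}}^{n} - W_{s_j}^{n}) - \sum_{j=0}^{m-1} f(s_j)(W_{s_{j+1}} - W_{s_j})$, the first equality using that $f_n^h$ is a simple predictable process. Since $f_n(s_j) \to f(s_j)$ and $W_{s_j}^{n} \to W_{s_j}$ in probability for each fixed $j$, the finite-dimensional vectors converge jointly in probability, and hence the continuous (polynomial) functional above converges: for each fixed $h$, $II_n^h \to 0$ in probability as $n \to \infty$. The term $III^h$ does not involve $n$ and tends to $0$ in probability as $h \searrow 0$: the process $f$ inherits stochastic boundedness and stochastic continuity from (a) and (b) by passing to the limit in probability, so the same truncation and localization estimate used below for $f_n$ applies to $f$ and gives $\int_0^t (f^h - f)^2 \rd s \to 0$ in probability.

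The heart of the matter is $I_n^h$, which must be made small uniformly in $n$ as $h \searrow 0$; this is the main obstacle. Here I would use the standard localization estimate for stochastic integrals: for any adapted $g$ and any $K, \delta > 0$,
\begin{align*}
	\p\left( \left| \int_{0}^{t} g_s \rd W_s^{n} \right| > \delta \right)
	\leq
	\p\left( \int_{0}^{t} g_s^2 \rd s > K \right)
	+ \frac{K}{\delta^2},
\end{align*}
which follows by stopping at $\tau_K := \inf\{ u : \int_0^u g_s^2 \rd s \geq K \}$ and applying It\^o's isometry and Chebyshev's inequality to the stopped martingale. Applying this with $g = f_n - f_n^h$, it remains to bound $\p( \int_0^t |f_n(s) - f_n(\eta_h(s))|^2 \rd s > K )$. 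Splitting according to the event $\{ \sup_{0 \leq s \leq T} |f_n(s)| \leq K' \}$ (controlled by (a)) and according to $\{ |f_n(s) - f_n(\eta_h(s))| > \varepsilon \}$, on the former event one has
\begin{align*}
	\int_{0}^{t} |f_n(s) - f_n(\eta_h(s))|^2 \rd s
	\leq \varepsilon^2 T
	+ (2K')^2 \int_{0}^{t} \1_{\{ |f_n(s) - f_n(\eta_h(s))| > \varepsilon \}} \rd s ,
\end{align*}
and after Chebyshev and taking expectations the last term is bounded by a multiple of $T \sup_{|t_1 - t_2| \leq h} \p(|f_n(t_2) - f_n(t_1)| > \varepsilon)$, which hypothesis (b) forces to vanish in the iterated limit $\lim_{h \searrow 0} \lim_{n \to \infty}$.

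Finally, I would assemble the estimates respecting the order of limits dictated by (b). Taking $\limsup_{n \to \infty}$ for fixed $h, K, K', \varepsilon$ kills $II_n^h$ and turns the bound on $I_n^h$ into $K\delta^{-2} + \sup_n \p(\sup_{s} |f_n(s)| > K') + c\,(K')^2 T K^{-1} g(h)$, where $g(h)$ denotes the inner limit appearing in (b) and $c$ is absolute; letting $h \searrow 0$ sends $g(h) \to 0$, and then sending $K \searrow 0$ and $K' \to \infty$ (via (a)) yields $\lim_{h \searrow 0} \limsup_{n \to \infty} \p(|I_n^h| > \delta) = 0$. Combined with $\lim_{h \searrow 0} \p(|III^h| > \delta) = 0$ and $\lim_{n\to\infty}\p(|II_n^h|>\delta)=0$ for fixed $h$, this gives $\limsup_{n \to \infty} \p( | \int_0^t f_n(s) \rd W_s^{n} - \int_0^t f(s) \rd W_s | > 3\delta ) = 0$ for every $\delta > 0$, which is the desired convergence in probability. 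The only genuinely delicate point is that the modulus-of-continuity hypothesis (b) is an iterated limit rather than a uniform-in-$n$ bound, so the truncation argument for $I_n^h$ must be arranged to send $n \to \infty$ before $h \searrow 0$.
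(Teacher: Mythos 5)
The first thing to note is that the paper contains no proof of this statement: Lemma \ref{Lem_Sk} is quoted verbatim as Skorokhod's theorem (Chapter 3, Section 3, page 32 of \cite{Sk65}) and is used as a black box, so there is no internal argument to compare yours against. What you have written is, in outline, precisely the classical argument behind the cited result: discretize the integrand along a partition of mesh $h$; identify the discretized integrals with finite Riemann sums, which converge because coordinatewise convergence in probability (unlike convergence in law) passes to finite-dimensional vectors and then through continuous maps; and control the discretization error via the truncation condition (a), the modulus-of-continuity condition (b), and the localization inequality $\p\left(\left|\int_{0}^{t}g_s\,\rd W_s^{n}\right|>\delta\right)\leq\p\left(\int_{0}^{t}g_s^{2}\,\rd s>K\right)+K/\delta^{2}$, which converts bounds in probability on $\int_0^t g_s^2\,\rd s$ into bounds on the stochastic integral. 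Your decomposition into $I_n^h+II_n^h+III^h$, your treatment of $I_n^h$ and $II_n^h$, and your handling of the order of limits (sending $n\to\infty$ before $h\searrow 0$, as the iterated limit in (b) demands) are all correct; the closing remark that this ordering is the delicate point is exactly right.

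The one soft spot is in $III^h$, where you claim that $f$ ``inherits stochastic boundedness'' from (a) by passing to the limit in probability. This is not literally true: $\sup_{0\leq t\leq T}|f(t)|$ is a supremum over uncountably many times, and convergence in probability at each fixed $t$ gives no control over it (a modification of $f$ on a Lebesgue-null set of times can make this supremum arbitrarily large without affecting any hypothesis or any stochastic integral). What you actually need is weaker and is available: (i) the maximum of $|f|$ over the finitely many partition points $s_j$ is controlled in the limit by (a), since for finite families the maximum does respect convergence in probability; and (ii) the large values of $f(s)$ at arbitrary $s$ can be handled without (a) at all, because $\int_{0}^{t}f(s)^{2}\,\rd s<\infty$ almost surely (implicit in the integral being well-defined), so $\int_{0}^{t}f(s)^{2}\1\{|f(s)|>K'\}\,\rd s\to 0$ almost surely as $K'\to\infty$ by dominated convergence, uniformly in $h$. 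Splitting the truncation this way, your estimate $\int_{0}^{t}(f^h-f)^{2}\,\rd s\leq\varepsilon^{2}T+4(K')^{2}\,\mathrm{Leb}\{s:|f^h(s)-f(s)|>\varepsilon\}+2\int_{0}^{t}f(s)^{2}\1\{|f(s)|>K'\}\,\rd s$ on the event $\{\max_j|f(s_j)|\leq K'\}$ goes through, with the middle term killed by the stochastic continuity of $f$, which (unlike the uniform bound) genuinely is inherited from (b) since it only involves fixed pairs of times. So the gap is repairable within your own scheme and does not affect the structure of the proof.
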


Finally, we prove the following  key lemma in order to show main theorems and in particular to deal with non-sticky condition.

\begin{Lem}\label{Lem_2}
	Suppose that Assumption \ref{Ass_1} holds.
	Let $(X,W)$ be a solution of SDE \eqref{SDE_1} with non-sticky condition \eqref{boundary_0} and $\{X^{(k)}\}_{k \in \n}$ be a sub-sequence of the Euler--Maruyama scheme defined by \eqref{EM_0}.
	Then there exists a probability space
	$(\widehat{\Omega},\widehat{\mathcal{F}},\widehat{\p})$,
	a sub-sub-sequence $\{k_{\ell}\}_{\ell \in \n}$ and three-dimensional continuous processes $\widehat{Y}^{k_{\ell}}=(\widehat{X}^{k_{\ell}},\widehat{X}^{(k_{\ell})},\widehat{W}^{k_{\ell}})$ and $\widehat{Y}=(\widehat{X},\widehat{X}^{(*)},\widehat{W})$
	defined on the probability space $(\widehat{\Omega},\widehat{\mathcal{F}},\widehat{\p})$ such that the following properties are satisfied:
	\begin{itemize}
		\item[(i)]
		The law of stochastic processes $(X,X^{(k_{\ell})},W)$ and $(\widehat{X}^{k_{\ell}},\widehat{X}^{(k_{\ell})},\widehat{W}^{k_{\ell}})$ coincide for each $\ell \in \n$.
		In particular, $(\widehat{X}^{k_{\ell}},\widehat{X}^{(k_{\ell})},\widehat{W}^{k_{\ell}})$ can be chosen as follows: there exist measurable maps $\phi_{k_{\ell}}:\widehat{\Omega} \to \Omega$, $\ell \in \n$ such that
		\begin{align*}
		(
			\widehat{X}^{k_{\ell}},
			\widehat{X}^{(k_{\ell})},
			\widehat{W}^{k_{\ell}}
		)
		=
		(
			X \circ \phi_{k_{\ell}},
			X^{(k_{\ell})} \circ \phi_{k_{\ell}},
			W \circ \phi_{k_{\ell}}
		).
		\end{align*}
		
		\item[(ii)]
		$\widehat{\p}(
		\lim_{\ell \to \infty}
		\sup_{0 \leq t \leq T}|\widehat{Y}_t^{k_{\ell}}-\widehat{Y}_t|
		=0
		)
		=1$.
		
		\item[(iii)]
		$\widehat{W}$ is a Brownian motion and $\widehat{X}$, $\widehat{X}^{(*)}$ are continuous martingales on $(\widehat{\Omega},\widehat{\mathcal{F}},\widehat{\p})$.
		
		\item[(iv)]
		$\widehat{X}$ and $\widehat{X}^{(*)}$ satisfy non-sticky condition
		\begin{align}\label{boundary_1}
			\widehat{\e}
				\left[
					\int_{0}^{T}
						\1_{Z(\sigma)}(\widehat{X}_s)
					\rd s
				\right]
			=
			\widehat{\e}
				\left[
					\int_{0}^{T}
						\1_{Z(\sigma)}(\widehat{X}_s^{(*)})
					\rd s
				\right]
			=
			0.
		\end{align}
		
		\item[(v)]
		There exist an extension $(\widetilde{\Omega},\widetilde{\mathcal{F}},\widetilde{\p})$ of $(\widehat{\Omega},\widehat{\mathcal{F}},\widehat{\p})$ and Brownian motions $\widetilde{B}=(\widetilde{B}_t)_{t \in [0,T]}$, $\widetilde{B}^{(*)}=(\widetilde{B}_t^{(*)})_{t \in [0,T]}$ such that $(\widehat{X},\widetilde{B})$ and $(\widehat{X}^{(*)},\widetilde{B}^{(*)})$ are solutions of SDE \eqref{SDE_1}  with non-sticky condition \eqref{boundary_1}.
		
		\item[(vi)]
		If $\sigma$ is continuous then $(\widehat{X},\widehat{W})$ and $(\widehat{X}^{(*)},\widehat{W})$ are solutions of SDE \eqref{SDE_1} with non-sticky condition \eqref{boundary_1}.
	\end{itemize}
\end{Lem}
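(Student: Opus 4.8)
The plan is to produce the limit objects by a tightness plus Skorokhod representation argument, and then to verify the six properties in turn, reading off the Brownian, martingale and non-sticky structure from the estimates already established. First I would prove tightness of the family of laws of the triples $(X,X^{(k)},W)$ on $C[0,T]^3$. The fixed components $X$ and $W$ are trivially tight, while for the Euler--Maruyama component the uniform moment bound \eqref{Lem_0_1} and the increment estimate \eqref{Lem_0_3} yield tightness through Kolmogorov's criterion. By Prokhorov's theorem there is a weakly convergent sub-sub-sequence indexed by $\{k_{\ell}\}$, and by the Skorokhod representation theorem in its form producing measurable maps $\phi_{k_{\ell}}:\widehat{\Omega}\to\Omega$ with $\widehat{Y}^{k_{\ell}}=(X,X^{(k_{\ell})},W)\circ\phi_{k_{\ell}}$, there exist processes $\widehat{Y}^{k_{\ell}}$ and a limit $\widehat{Y}=(\widehat{X},\widehat{X}^{(*)},\widehat{W})$ on a common space $(\widehat{\Omega},\widehat{\mathcal{F}},\widehat{\p})$ with $\widehat{Y}^{k_{\ell}}\to\widehat{Y}$ uniformly on $[0,T]$, almost surely. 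This is exactly (i) and (ii).

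Next, for (iii), since $\widehat{W}^{k_{\ell}}$ has the law of $W$, the limit $\widehat{W}$ has the law of a Brownian motion and, being adapted with $\langle\widehat{W}\rangle_t=t$, is a Brownian motion by L\'evy's characterization. Each $\widehat{X}^{k_{\ell}}=X\circ\phi_{k_{\ell}}$ and $\widehat{X}^{(k_{\ell})}=X^{(k_{\ell})}\circ\phi_{k_{\ell}}$ is a martingale because $X$ and $X^{(n)}$ are; passing to the limit against bounded continuous functionals of the path, using the almost sure uniform convergence from (ii) together with the uniform integrability supplied by \eqref{Lem_0_1} for some $p>1$, shows that $\widehat{X}$ and $\widehat{X}^{(*)}$ are continuous martingales. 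For the non-sticky condition (iv) I would use Lemma \ref{Lem_1}: write $Z(\sigma)=\{z_j\}_j$, pick for each $j$ and each $\varepsilon>0$ a Lipschitz bump $f_{z_j}$ with $f_{z_j}(z_j)=1$ supported in $[z_j-\varepsilon,z_j+\varepsilon]$, and combine Fatou's lemma with \eqref{Lem_1_1}--\eqref{Lem_1_2}. For $\widehat{X}^{(*)}$ the term $\|f_{z_j}\|_{\mathrm{Lip}}/k_{\ell}^{1/2}$ in \eqref{Lem_1_2} vanishes as $\ell\to\infty$, leaving $\widehat{\e}[\int_0^T f_{z_j}(\widehat{X}^{(*)}_s)\,\mathrm{d}s]\le C\int_{-\varepsilon}^{\varepsilon}\sigma(z_j+y)^{-2}\,\mathrm{d}y$; letting $\varepsilon\searrow 0$ and invoking Assumption \ref{Ass_1}~(i), then summing over the countable set, shows the occupation of $Z(\sigma)$ is zero, and the same argument with \eqref{Lem_1_1} handles $\widehat{X}$.

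For (v) and (vi) I would identify the dynamics. In the continuous case (vi), the Euler relation $\widehat{X}^{(k_{\ell})}_t=x_{k_{\ell}}+\int_0^t\sigma(\widehat{X}^{(k_{\ell})}_{\eta_{k_{\ell}}(s)})\,\mathrm{d}\widehat{W}^{k_{\ell}}_s$ is inherited from the law of $(X^{(k_{\ell})},W)$; setting $f_{k_{\ell}}(s)=\sigma(\widehat{X}^{(k_{\ell})}_{\eta_{k_{\ell}}(s)})$ and applying Skorokhod's Lemma \ref{Lem_Sk}, whose hypotheses (a),(b) follow from \eqref{Lem_0_1} and \eqref{Lem_0_3} and whose convergence-in-probability input follows from continuity of $\sigma$ and $\widehat{X}^{(k_{\ell})}_{\eta_{k_{\ell}}(s)}\to\widehat{X}^{(*)}_s$, lets me pass to the limit and obtain $\widehat{X}^{(*)}_t=x_0+\int_0^t\sigma(\widehat{X}^{(*)}_s)\,\mathrm{d}\widehat{W}_s$, while the analogous pullback identity gives $\widehat{X}_t=x_0+\int_0^t\sigma(\widehat{X}_s)\,\mathrm{d}\widehat{W}_s$. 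For the general measurable case (v), continuity of $\sigma$ fails, so I cannot feed $\sigma$ into Lemma \ref{Lem_Sk} directly; instead I would identify the quadratic variation $\langle\widehat{X}\rangle_t=\int_0^t\sigma(\widehat{X}_s)^2\,\mathrm{d}s$ and then represent $\widehat{X}$ as a stochastic integral against a Brownian motion $\widetilde{B}$ on an extension, using $\sigma(\widehat{X}_s)\ne 0$ for a.e.\ $s$ from (iv) to set $\mathrm{d}\widetilde{B}_s=\sigma(\widehat{X}_s)^{-1}\,\mathrm{d}\widehat{X}_s$ and invoking L\'evy's characterization. The main obstacle is precisely this quadratic-variation identification: one has $\langle\widehat{X}^{k_{\ell}}\rangle_t=\int_0^t\sigma(\widehat{X}^{k_{\ell}}_s)^2\,\mathrm{d}s$, and while the left-hand side converges to $\langle\widehat{X}\rangle_t$ by a uniformly integrable martingale-convergence argument, pushing the \emph{discontinuous} integrand $\sigma^2$ through the limit requires showing that $\widehat{X}$ spends zero Lebesgue time in the discontinuity set $D(\sigma)$. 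Here Assumption \ref{Ass_1}~(ii), namely that $D(\sigma)$ is Lebesgue-null and $\sigma_1^2>0$ on it, together with the non-sticky property and the occupation time formula, is what rescues the argument, and this is the delicate step to get right.
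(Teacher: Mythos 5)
Your proposal is correct and follows essentially the same route as the paper's proof: tightness plus Prohorov and the Skorokhod representation theorem (in its measurable-maps form) for (i)--(ii), martingale limit arguments with L\'evy's characterization for (iii), the Lipschitz tent-function argument via Lemma \ref{Lem_1} and Assumption \ref{Ass_1} (i) with the limits taken in the same order ($\ell \to \infty$ first, then $\varepsilon \searrow 0$) for (iv), quadratic-variation identification resting on the occupation time formula, $\mathrm{Leb}(D(\sigma))=0$ and $\sigma_1(y)^2>0$ on $D(\sigma)$ for (v), and Skorokhod's Lemma \ref{Lem_Sk} applied to $\sigma(\widehat{X}^{(k_{\ell})}_{\eta_{k_{\ell}}(\cdot)})$ for (vi). The only cosmetic deviations are that the paper obtains the Brownian and martingale properties in (iii) from Yan's martingale-limit lemma rather than from equality of laws, and in (v) invokes the martingale representation theorem on an extension rather than constructing $\widetilde{B}$ by inverting $\sigma(\widehat{X})$; both are interchangeable standard tools within the same architecture.
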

\begin{proof}
	Proof of (i) and (ii).
	We first note that since the diffusion coefficient $\sigma$ is of linear growth, the estimates in Lemma \ref{Lem_0} hold.
	Hence it follows from Theorem 4.3 and the proof of Theorem 4.2 in \cite{IkWa} that the family of three-dimensional stochastic process $\{(X,X^{(k)},W)\}_{k \in \n}$ is tight in $C[0,T]^{k}$, and thus is relatively compact in $C[0,T]^{k}$ by Prohorov's Theorem (see, e.g. Theorem 2.4.7 in \cite{KS}).
	Hence there exist a sub-sequence $\{k_{\ell}\}_{\ell \in \n}$ and $X^{(*)}$ such that $\lim_{\ell \to \infty}\e[f(X,X^{(k_{\ell})},W)]=\e[f(X,X^{(*)},W)]$, for any $f \in C_b(C[0,T]^{3};\real)$.
	Therefore, by using Skorohod's representation theorem (see, e.g. Theorem 1.2.7 in \cite{IkWa} or Theorem 1.10.4 in \cite{VaWe96}) and Addendum 1.10.5 in \cite{VaWe96}, there exists a probability space $(\widehat{\Omega},\widehat{\mathcal{F}},\widehat{\p})$, three-dimensional continuous processes $\widehat{Y}^{k_{\ell}}=(\widehat{X}^{k_{\ell}},\widehat{X}^{(k_{\ell})},\widehat{W}^{k_{\ell}})$ and $\widehat{Y}=(\widehat{X},\widehat{X}^{(*)},\widehat{W})$
	defined on the probability space $(\widehat{\Omega},\widehat{\mathcal{F}},\widehat{\p})$ and measurable maps $\phi_{k_{\ell}}:\widehat{\Omega} \to \Omega$, $\ell \in \n$ such that the properties (i) and (ii) are satisfied.
	
	Proof of (iii).
	We first prove $\widehat{W}$ is a Brownian motion on $(\widehat{\Omega},\widehat{\mathcal{F}},\widehat{\p})$.
	From the property (i), $\widehat{W}^{k_{\ell}}$ is a Brownian motion, so $\widehat{W}^{k_{\ell}}$ and $(|\widehat{W}_t^{k_{\ell}}|^2-t)_{t \in [0,T]}$ are martingales.
	Therefore it follows from Lemma A.1 in \cite{Yan} and the above property (ii) that $\widehat{W}$ and $(|\widehat{W}|^2_t-t)_{t \in [0,T]}$ are martingales, thus the quadratic variation of $\widehat{W}_t$ is $t$ for all $t \in [0,T]$.
	L\'evy's Theorem (e.g. Theorem 3.3.16 in \cite{KS}) implies that $\widehat{W}$ is a Brownian motion.
	
	Next, we prove $\widehat{X}$ and $\widehat{X}^{(*)}$ are continuous martingales.
	By using the above property (i), it holds that $(\widehat{X}^{k_{\ell}}, \widehat{W}^{k_{\ell}})$ satisfies the following equations
	\begin{align}\label{Lem_2_3_1}
		\widehat{X}_t^{k_{\ell}}
		&=
		x_0
		+\int_{0}^{t}
			\sigma(\widehat{X}_s^{k_{\ell}})
		\rd \widehat{W}_s^{k_{\ell}}
		\quad\text{and}\quad
		\widehat{\e}
		\left[
			\int_{0}^{T}
				\1_{Z(\sigma)}(\widehat{X}_s^{k_{\ell}})
			\rd s
		\right]
		=0
	\end{align}
	and by using Lemma \ref{Lem_EM_0}, $(\widehat{X}^{(k_{\ell})}, \widehat{W}^{k_{\ell}})$ satisfies the following equations
	\begin{align}\label{Lem_2_3_2}
		\widehat{X}^{(k_{\ell})}_t
		&=
		x_{k_{\ell}}
		+\int_{0}^{t}
			\sigma(\widehat{X}^{(k_{\ell})}_{\eta_{k_{\ell}}(s)})
		\rd \widehat{W}_s^{k_{\ell}}
		\quad\text{and}\quad
		\widehat{\e}
			\left[
				\int_{0}^{T}
					\1_{Z(\sigma)}(\widehat{X}_s^{(k_{\ell})})
				\rd s
			\right]
		=0.
	\end{align}
	Thus from Lemma \ref{Lem_0}, sequences of stochastic process $\{\widehat{X}^{k_{\ell}} \}_{\ell \in \n}$ and $\{\widehat{X}^{(k_{\ell})} \}_{\ell \in \n}$ are uniformly integrable martingales, which uniformly converge to $\widehat{X}$ and $\widehat{X}^{(*)}$, respectively.
	Hence from Lemma A.1 in \cite{Yan}, we conclude $\widehat{X}$ and $\widehat{X}^{(*)}$ are continuous martingales.
	
	Proof of (iv).
	For $\varepsilon>0$ and $z \in Z(\sigma)$, we define a continuous function $f_{\varepsilon,z}:\real \to [0,1]$ by
	\begin{align*}
		f_{\varepsilon,z}(x)
		:=
		\left\{\begin{array}{ll}
		\displaystyle
		-\frac{x-z}{\varepsilon}+1
		&\text{ if } 0\leq x-z<\varepsilon,  \\
		\displaystyle
		\frac{x-z}{\varepsilon}+1 &\text{ if } -\varepsilon < x-z <0,  \\
		\displaystyle
		0 &\text{ if } |x-z| \geq \varepsilon.
		\end{array}\right.
	\end{align*}
	Then it is easy to see that $\lim_{\varepsilon \searrow 0} f_{\varepsilon,z}(x)=\1_{\{z\}}(x)$ for each $x \in \real$, and $f_{\varepsilon,z}$ is Lipschitz continuous with $\|f_{\varepsilon,z}\|_{\mathrm{Lip}}=2/\varepsilon$.
	Recall that $\widehat{X}^{k_{\ell}}$ and $\widehat{X}^{(k_{\ell})}$ satisfy the equations \eqref{Lem_2_3_1} and \eqref{Lem_2_3_2}, respectively.
	Hence from the dominated convergence theorem, Lemma \ref{Lem_1} and Assumption \ref{Ass_1} (i), we have
	\begin{align*}
		&
		\widehat{\e}
		\left[
			\int_{0}^{T}
				\1_{Z(\sigma)}(\widehat{X}_s)
			\rd s
		\right]
		+
		\widehat{\e}
		\left[
			\int_{0}^{T}
				\1_{Z(\sigma)}(\widehat{X}^{(*)}_s)
			\rd s
		\right]\\
		&=
		\sum_{z \in Z(\sigma)}
		\left\{
			\widehat{\e}
			\left[
				\int_{0}^{T}
				\1_{\{z\}}(\widehat{X}_s)
			\rd s
			\right]
			+
			\widehat{\e}
			\left[
				\int_{0}^{T}
				\1_{\{z\}}(\widehat{X}^{(*)}_s)
			\rd s
			\right]
		\right\}\\
		&=
		\sum_{z \in Z(\sigma)}
		\lim_{\varepsilon \searrow 0}
		\left\{
			\widehat{\e}
			\left[
			\int_{0}^{T}
				f_{\varepsilon,z}(\widehat{X}_s)
			\rd s
			\right]
			+
			\widehat{\e}
			\left[
				\int_{0}^{T}
					f_{\varepsilon,z}(\widehat{X}^{(*)}_s)
				\rd s
			\right]
			\right\}
			\\
		&=
		\sum_{z \in Z(\sigma)}
		\lim_{\varepsilon \searrow 0}
		\lim_{\ell \to \infty}
		\left\{
			\widehat{\e}
				\left[
					\int_{0}^{T}
						f_{\varepsilon,z}(\widehat{X}_{s}^{k_{\ell}})
					\rd s
				\right]
			+
			\widehat{\e}
				\left[
				\int_{0}^{T}
				f_{\varepsilon,z}(\widehat{X}^{(k_{\ell})}_s)
				\rd s
			\right]
		\right\}\\
		&\leq
		2C
		\sum_{z \in Z(\sigma)}
		\lim_{\varepsilon \searrow 0}
		\lim_{\ell \to \infty}
		\left\{
			\int_{-\varepsilon}^{\varepsilon}
				\frac{1}{\sigma(z+y)^2}
			\rd y
			+
			\frac{1}{\varepsilon k_{\ell}^{1/2}}
		\right\}\\
		&
		=
		2C
		\sum_{z \in Z(\sigma)}
		\lim_{\varepsilon \searrow 0}
			\int_{-\varepsilon}^{\varepsilon}
			\frac{1}{\sigma(z+y)^2}
			\rd y
		=0,
	\end{align*}
	which concludes (iv).
	
	Proof of (v).
	The proof is almost the same as Lemma 2.3 and Theorem 2.1 in \cite{Yan}.
	We first prove that for each $t \in [0,T]$,
	\begin{align}\label{Lem_2_1}
		\lim_{\ell \to \infty}
		\langle \widehat{X}^{k_{\ell}} \rangle_t
		=
		\langle \widehat{X} \rangle_t
		\quad\text{and}\quad
		\lim_{\ell \to \infty}
		\langle \widehat{X}^{(k_{\ell})} \rangle_t
		=
		\langle \widehat{X}^{(*)} \rangle_t
	\end{align}
	in $L^{1}(\widehat{\Omega},\widehat{\mathcal{F}},\widehat{\p})$ and
	\begin{align}
		\lim_{\ell \to \infty}
				\int_{0}^{t}
					\sigma(\widehat{X}_s^{k_{\ell}})^2
					\1(\widehat{X}_s \notin D(\sigma))
					\rd s
		&=
				\int_{0}^{t}
					\sigma(\widehat{X}_s)^2
					\1(\widehat{X}_s \notin D(\sigma))
				\rd s,
		\label{Lem_2_2}
		\\
		\lim_{\ell \to \infty}
				\int_{0}^{t}
					\sigma(\widehat{X}_{\eta_{k_{\ell}}(s)}^{(k_{\ell})})^2
					\1(\widehat{X}_s^{(*)} \notin D(\sigma))
				\rd s
		&=
				\int_{0}^{t}
					\sigma(\widehat{X}_{s}^{(*)})^2
					\1(\widehat{X}_s^{(*)} \notin D(\sigma))
				\rd s,
		\label{Lem_2_3}
	\end{align}
	in $L^{1}(\widehat{\Omega},\widehat{\mathcal{F}},\widehat{\p})$.
	From the property (ii), continuous martingales $\widehat{X}^{k_{\ell}}$ and $\widehat{X}^{(k_{\ell})}$ converge to $\widehat{X}$ and $\widehat{X}^{(*)}$ almost surely in $C[0,T]$, respectively.
	Hence it follows from Theorem 2.2 in \cite{KuPr91} that $\int_{0}^{\cdot} \widehat{X}^{k_{\ell}}_s \rd \widehat{X}^{k_{\ell}}_s$ and $\int_{0}^{\cdot} \widehat{X}^{(k_{\ell})}_s \rd \widehat{X}^{(k_{\ell})}_s$ converge to $\int_{0}^{\cdot} \widehat{X}_s \rd \widehat{X}_s$ and $\int_{0}^{\cdot} \widehat{X}^{(*)}_s \rd \widehat{X}^{(*)}_s$ in probability as $\ell \to \infty$, respectively.
	Since for any squared integrable continuous martingale $M$, $\langle M \rangle_t=M_t^2-M_0^2-2\int_{0}^{t} M_s \rd M_s$, we have
	\begin{align*}
		\lim_{\ell \to \infty}
		\langle \widehat{X}^{k_{\ell}} \rangle_t
		=
		\langle \widehat{X} \rangle_t
		\quad\text{and}\quad
		\lim_{\ell \to \infty}
		\langle \widehat{X}^{(k_{\ell})} \rangle_t
		=
		\langle \widehat{X}^{(*)} \rangle_t,
	\end{align*}
	in probability.
	On the other hand, since $\sigma$ is of linear growth, from Lemma \ref{Lem_0}, the classes
	\begin{align*}
		\left\{
			\langle \widehat{X}^{k_{\ell}} \rangle_t,~
			\langle \widehat{X}^{(k_{\ell})} \rangle_t
			~;~
			t \in [0,T],~
			\ell \in \n
		\right\}
		~\text{and}~
		\left\{
			\sigma(\widehat{X}^{k_{\ell}}_t)^2,~
			\sigma(\widehat{X}^{(k_{\ell})}_{\eta_{k_{\ell}}(t)})^2
		~;~
		t \in [0,T],~
		\ell \in \n
		\right\}
	\end{align*}
	are uniformly integrable, thus we conclude \eqref{Lem_2_1}, \eqref{Lem_2_2} and \eqref{Lem_2_3}.
	
	Recall that $\sigma_1(y)^2:=\liminf_{x \to y} \sigma(x)^2$ for $y \in \real$.
	Using Fatou's lemma and \eqref{Lem_2_1}, we have for any $0 \leq r < t \leq T$,
	\begin{align*}
		\widehat{\e}
		\left[
			\int_{r}^{t}
				\sigma_1(\widehat{X}_s)^2
			\rd s
		\right]
		=
		\widehat{\e}
			\left[
			\int_{r}^{t}
				\liminf_{\ell \to \infty}
				\sigma(\widehat{X}_s^{k_{\ell}})^2
			\rd s
		\right]
		\leq
		\liminf_{\ell \to \infty}
		\widehat{\e}
			\left[
			\int_{r}^{t}
				\sigma(\widehat{X}_s^{k_{\ell}})^2
			\rd s
		\right]
		=
		\widehat{\e}
		\left[
			\langle \widehat{X} \rangle_t
			-
			\langle \widehat{X} \rangle_r
		\right]
	\end{align*}
	and by the same way,
	\begin{align*}
		\widehat{\e}
		\left[
		\int_{r}^{t}
			\sigma_1(\widehat{X}_s^{(*)})^2
		\rd s
		\right]
		\leq
		\widehat{\e}
			\left[
				\langle \widehat{X}^{(*)} \rangle_t
				-
				\langle \widehat{X}^{(*)} \rangle_r
			\right].
	\end{align*}
	Therefore, using the occupation time formula and Lemma \ref{Lem_0}, we have
	\begin{align*}
		\widehat{\e}
			\left[
				\int_{0}^{T}
					\1_{D(\sigma)}(\widehat{X}_s)
					\sigma_1(\widehat{X}_s)^2
				\rd s
			\right]
		&\leq
		\widehat{\e}
		\left[
			\int_{0}^{T}
				\1_{D(\sigma)}(\widehat{X}_s)
			\rd \langle \widehat{X} \rangle_s
		\right]
		=
		\widehat{\e}
		\left[
			\int_{D(\sigma)}
				L_T^y(\widehat{X})
			\rd y
		\right]\\
		&
		\leq
		\mathrm{Leb}(D(\sigma))
		\sup_{y \in \real}
		\widehat{\e}[L_T^y(\widehat{X})]
		=0
	\end{align*}
	and by the same way
	\begin{align*}
		\widehat{\e}
			\left[
				\int_{0}^{T}
					\1_{D(\sigma)}(\widehat{X}_s^{(*)})
					\sigma_1(\widehat{X}_s^{(*)})^2
				\rd s
			\right]
		=0.
	\end{align*}
	Recall that from the assumption, $\sigma_1(y)^2=\liminf_{x \to y} \sigma(x)^2>0$ for $y \in D(\sigma)$, so we obtain
	\begin{align}\label{Lem_2_3.5}
		\int_{0}^{T}
			\1_{D(\sigma)}(\widehat{X}_s)
		\rd s
		=
		\int_{0}^{T}
			\1_{D(\sigma)}(\widehat{X}_s^{(*)})
		\rd s
		=0,
	\end{align}
	$\widehat{\p}$-almost surely.
	Therefore, it hold  from \eqref{Lem_2_2}, \eqref{Lem_2_3.5} and \eqref{Lem_2_3} that for any $t \in [0,T]$,
	\begin{align}\label{Lem_2_4}
		\lim_{\ell \to \infty}
		\widehat{\e}
		\left[
			\left|
				\langle \widehat{X}^{k_{\ell}} \rangle_t
				-
				\int_{0}^{t}
					\sigma(\widehat{X}_s)^2
				\rd s
			\right|
		\right]
		&
		=
		\lim_{\ell \to \infty}
		\widehat{\e}
		\left[
			\left|
				\int_{0}^{t}
					\left\{	
						\sigma(\widehat{X}_s^{k_{\ell}})^2
						-
						\sigma(\widehat{X}_s)^2
					\right\}
					\1(\widehat{X}_s \notin D(\sigma))
				\rd s
			\right|
		\right]
		=0
	\end{align}
	and
	\begin{align}\label{Lem_2_5}
		\lim_{\ell \to \infty}
		\widehat{\e}
		\left[
			\left|
				\langle \widehat{X}^{(k_{\ell})} \rangle_t
				-
				\int_{0}^{t}
					\sigma(\widehat{X}^{(*)}_s)^2
				\rd s
			\right|
		\right]
		=
		0.
	\end{align}
	Therefore, from \eqref{Lem_2_1}, \eqref{Lem_2_4} and \eqref{Lem_2_5}, we obtain
	$\langle \widehat{X} \rangle_t=\int_{0}^{t}\sigma(\widehat{X}_s)^2\rd s$ and $\langle \widehat{X}^{(*)} \rangle_t=\int_{0}^{t}\sigma(\widehat{X}_s^{(*)})^2\rd s$, $\widehat{\p}$-almost surely.
	Since $\widehat{X}$ and $\widehat{X}^{(*)}$ are square integrable continuous martingales, by using martingale representation theorem (see, e.g. chapter 2, Theorem 7.1' in \cite{IkWa}), there exist an extension $(\widetilde{\Omega},\widetilde{\mathcal{F}},\widetilde{\p})$ of $(\widehat{\Omega},\widehat{\mathcal{F}},\widehat{\p})$ and Brownian motions $\widetilde{B}=(\widetilde{B}_t)_{t \in [0,T]}$, $\widetilde{B}^{(*)}=(\widetilde{B}_t^{(*)})_{t \in [0,T]}$ such that
	\begin{align*}
		\widehat{X}_t
		=
		x_0
		+
		\int_{0}^{t}
			\sigma(\widehat{X}_s)
		\rd \widetilde{B}_s
		\quad\text{and}\quad
		\widehat{X}_t^{(*)}
		=
		x_0
		+
		\int_{0}^{t}
			\sigma(\widehat{X}_s^{(*)})
		\rd \widetilde{B}_s^{(*)},
	\end{align*}
	$\widetilde{\p}$-almost surely, thus from the property (iv), we conclude the statement of (v).
	
	Proof of (vi).
	We first prove that $\{\widehat{X}^{k_{\ell}}\}_{\ell \in \n}$ and $\{\widehat{X}^{(k_{\ell})}\}_{\ell \in \n}$ satisfy the following two properties:
	\begin{itemize}
		\item[(a)]
		For any $\varepsilon>0$ and a measurable, polynomial growth function $f:\real \to \real$, there exists $K \equiv K(\varepsilon,f)>0$ such that
		\begin{align*}
			\sup_{\ell \in \n}
			\max
			\left\{
				\widehat{\p}
				\left(
					\sup_{0 \leq s \leq T}
						|f(\widehat{X}_s^{k_{\ell}})|
					\geq K
				\right)
				,
				\widehat{\p}
				\left(
					\sup_{0 \leq s \leq T}
						|f(\widehat{X}_{s}^{(k_{\ell})})|
					\geq K
				\right)
			\right\}
			< \varepsilon.
		\end{align*}
		\item[(b)]
		For any $\widetilde{\varepsilon}>0$ and a continuous function $g:\real \to \real$,
		\begin{align*}
			\lim_{h \to 0}
			\lim_{\ell \to \infty}
			\sup_{|t_1-t_2|\leq h}
			\max
			\left\{
				\widehat{\p}
				\left(
					\left|
						g(\widehat{X}_{t_1}^{k_{\ell}})
						-
						g(\widehat{X}_{t_2}^{k_{\ell}})
					\right|
					>\widetilde{\varepsilon}
				\right)
				,
				\widehat{\p}
				\left(
					\left|
						g(\widehat{X}_{t_1}^{(k_{\ell})})
						-
						g(\widehat{X}_{t_2}^{(k_{\ell})})
					\right|
					>\widetilde{\varepsilon}
				\right)
			\right\}
			=0
		\end{align*}
	\end{itemize}
	Indeed, the property (a) follows from Markov's inequality and Lemma \ref{Lem_0}.
	In order to prove the property (b), we use the property (a) with $f(x)=x$.
	Then since $g$ is uniformly continuous on the interval $[-K,K]$, there exists $\delta \equiv \delta(\widetilde{\varepsilon},K)>0$ such that for any $x,y \in [-K,K]$, if $|x-y|<\delta$ then $|g(x)-g(y)|<\widetilde{\varepsilon}$.
	Therefore, it follows from Markov's inequality and Lemma \ref{Lem_0} that
	\begin{align*}
		&
		\sup_{\ell \in \n}
		\widehat{\p}
			\left(
				\left|
					g(\widehat{X}_{t_1}^{k_{\ell}})
					-
					g(\widehat{X}_{t_2}^{k_{\ell}})
				\right|
				\geq \widetilde{\varepsilon} 
			\right)\\
		&
		\leq
			\sup_{\ell \in \n}
			\widehat{\p}
				\left(
					\left|
						g(\widehat{X}_{t_1}^{k_{\ell}})
						-
						g(\widehat{X}_{t_2}^{k_{\ell}})
					\right|
					\geq \widetilde{\varepsilon},~
					\sup_{s\in[0,T]}|\widehat{X}_{s}^{k_{\ell}}|
					\leq
					K
				\right)
		+
			\sup_{\ell \in \n}
			\widehat{\p}
			\left(
				\sup_{s\in[0,T]}|\widehat{X}_{s}^{k_{\ell}}|
				\geq
				K
			\right)
		\\
		&
		\leq
			\sup_{\ell \in \n}
			\widehat{\p}
				\left(
					\left|
						\widehat{X}_{t_1}^{k_{\ell}}
						-
						\widehat{X}_{t_2}^{k_{\ell}}
					\right|
					\geq \delta
				\right)
		+\varepsilon
		\leq
		\frac{C_1|t_1-t_2|^{1/2}}{\delta}
		+
		\varepsilon
	\end{align*}
	and by the same way,
	\begin{align*}
		\sup_{\ell \in \n}
		\widehat{\p}
			\left(
				\left|
					g(\widehat{X}_{t_1}^{(k_{\ell})})
					-
					g(\widehat{X}_{t_2}^{(k_{\ell})})
				\right|
				\geq \widetilde{\varepsilon}
			\right)
		\leq
		\frac{C_1|t_1-t_2|^{1/2}}{\delta}
		+
		\varepsilon.
	\end{align*}
	By taking $h \to 0$, since $\varepsilon$ is arbitrary, the property (b) follows.
	
	Recall that $\sigma$ is continuous, $\lim_{n \to \infty} x_n=x_0$ and $\widehat{W}$ is a Brownian motion.
	It follows from Lemma \ref{Lem_Sk} and the above properties (a), (b) with $f=g=\sigma$ that, by letting $\ell \to \infty$, the limits $\widehat{X}$ and $\widehat{X}^{(*)}$ are satisfies the equation
	\begin{align*}
		\widehat{X}_t
		&=
		x_0
		+\int_{0}^{t}
			\sigma(\widehat{X}_s)
		\rd \widehat{W}_s
		\quad\text{and}\quad
		\widehat{X}^{(*)}_t
		=
		x_0
		+\int_{0}^{t}
			\sigma(\widehat{X}^{(*)}_s)
		\rd \widehat{W}_s
	\end{align*}
	and thus from the property (iv), we conclude the statement of (vi).
\end{proof}

\subsection{Proof of main theorems}\label{sub_sec_2_4}

Before proving Theorem \ref{main_0}, we recall the following elementally fact on calculus.
Let $\{a_n\}_{n \in \n}$ be a sequence on $\real$ and $a \in \real$.
If for any sub-sequence $\{a_{n_{k}}\}_{k \in \n}$ of $\{a_n\}_{n \in \n}$, there exists a sub-sub-sequence $\{a_{n_{k_{\ell}}}\}_{\ell \in \n}$ such that $\lim_{\ell \to \infty} a_{n_{k_{\ell}}}=a$, then the sequence $\{a_n\}_{n \in \n}$ converges to $a$.
By using the this fact and Lemma \ref{Lem_2}, we prove Theorem \ref{main_0}.

\begin{proof}[Proof of Theorem \ref{main_0}]
	It is enough to prove that for any sub-sequence $\{X^{(k)}\}_{k \in \n}$ of the Euler--Maruyama scheme $\{X^{(n)}\}_{n \in \n}$ defined by \eqref{EM_0}, there is a sub-sub-sequence $\{X^{(k_{\ell})}\}_{\ell \in \n}$ such that  for any $f \in C_b(C[0,T];\real)$,
	\begin{align*}
		\lim_{\ell \to \infty}
		\e[f(X^{(k_{\ell})})]
		=
		\e[f(X)].
	\end{align*}
	
	Let $\{X^{(k)}\}_{k \in \n}$ be a sub-sequence of the Euler--Maruyama scheme $\{X^{(n)}\}_{n \in \n}$.
	From Lemma \ref{Lem_2}, there exists a probability space
	$(\widehat{\Omega},\widehat{\mathcal{F}},\widehat{\p})$,
	a sub-sequence $\{k_{\ell}\}_{\ell \in \n}$ and $3$-dimensional continuous processes $\widehat{Y}^{k_{\ell}}=(\widehat{X}^{k_{\ell}},\widehat{X}^{(k_{\ell})},\widehat{W}^{k_{\ell}})$ and $\widehat{Y}=(\widehat{X},\widehat{X}^{(*)},\widehat{W})$
	defined on the probability space $(\widehat{\Omega},\widehat{\mathcal{F}},\widehat{\p})$ such that the properties (i)--(v) are satisfied.
	
	For the proof of this theorem, we only use $\widehat{X}^{(k_{\ell})}$ and $\widehat{X}^{(*)}$, do not use $(\widehat{X}^{k_{\ell}},\widehat{W}^{k_{\ell}})$ and $(\widehat{X},\widehat{W})$.
	From the property (i), (ii) in Lemma \ref{Lem_2} and using the dominated convergence theorem, we have
	\begin{align}\label{weak_main_0}
		\lim_{\ell \to \infty}
		\e[f(X^{(k_{\ell})})]
		=
		\lim_{\ell \to \infty}
		\widehat{\e}[f(\widehat{X}^{(k_{\ell})})]
		=\widehat{\e}[f(\widehat{X}^{(*)})],
	\end{align}
	for any $f \in C_b(C[0,T];\real)$.
	
	On the other hand, the property (iv) and (v) imply that there exist an extension $(\widetilde{\Omega},\widetilde{\mathcal{F}},\widetilde{\p})$ of $(\widehat{\Omega},\widehat{\mathcal{F}},\widehat{\p})$ and Brownian motion $\widetilde{B}^{(*)}=(\widetilde{B}_t^{(*)})_{t \in [0,T]}$ such that $(\widehat{X}^{(*)},\widetilde{B}^{(*)})$ is a solution of SDE \eqref{SDE_1} with non-sticky condition \eqref{boundary_1}.
	Hence from the uniqueness in law for SDE \eqref{SDE_1} with non-sticky condition \eqref{boundary_1} (see, Remark \ref{Rem_0} (i)) and \eqref{weak_main_0}, we have
	\begin{align*}
		\lim_{\ell \to \infty}
		\e[f(X^{(k_{\ell})})]
		=\widehat{\e}[f(\widehat{X}^{(*)})]
		=\widetilde{\e}[f(\widehat{X}^{(*)})]
		=\e[f(X)]
	\end{align*}
	for any $f \in C_b(C[0,T];\real)$.
	This concludes the statement.
\end{proof}

\begin{proof}[Proof of Theorem \ref{main_1}]
	The proof for the statement (ii) is similar to (i), thus we only prove the statement (i).
	
	The proof is based on \cite{KaNa}, that is, we prove the statement by contradiction.
	We suppose that the statement (i) is not true, that is, there exist $\varepsilon_0>0$ and a sub-sequence $\{n_k\}_{k \in \n}$ such that
	\begin{align}\label{pr_main_1}
		\e\left[
			\sup_{0 \leq t \leq T}
			\left|
				X_t
				-
				X_t^{(n_k)}
			\right|^p
		\right]
		\geq \varepsilon_0,
		~\text{for any}~k \in \n.
	\end{align}
	We now denote $X^{(k)}$ by $X^{(n_k)}$ to simplify.
	Then from Lemma \ref{Lem_2}, there exist a probability space
	$(\widehat{\Omega},\widehat{\mathcal{F}},\widehat{\p})$,
	a sub-sequence $\{k_{\ell}\}_{\ell \in \n}$ and $3$-dimensional continuous processes $\widehat{Y}^{k_{\ell}}=(\widehat{X}^{k_{\ell}},\widehat{X}^{(k_{\ell})},\widehat{W}^{k_{\ell}})$ and $\widehat{Y}=(\widehat{X},\widehat{X}^{(*)},\widehat{W})$
	defined on the probability space $(\widehat{\Omega},\widehat{\mathcal{F}},\widehat{\p})$ such that the properties (i)--(vi) are satisfied.
	
	Note that from Lemma \ref{Lem_0}, the family of random variables $\{\sup_{0 \leq t \leq T}|\widehat{X}_{t}^{k_{\ell}}-\widehat{X}_t^{(k_{\ell})}|^p\}_{\ell \in \n}$ is uniformly integrable.
	Therefore, from the assumption \eqref{pr_main_1} and the property (i), (ii) in Lemma \ref{Lem_2}, we have
	\begin{align}\label{pr_main_0}
		\varepsilon_0
		&
		\leq
		\liminf_{\ell \to \infty}
			\e
				\left[
					\sup_{0 \leq t \leq T}
					\left|
						X_t
						-
						X_t^{(k_{\ell})}
					\right|^p
				\right] \notag\\
		&=
		\liminf_{\ell \to \infty}
			\widehat{\e}
				\left[
					\sup_{0 \leq t \leq T}
					\left|
						\widehat{X}_{t}^{k_{\ell}}
						-
						\widehat{X}_t^{(k_{\ell})}
					\right|^p
				\right] \notag\\
		&=
		\widehat{\e}
			\left[
				\sup_{0 \leq t \leq T}
				\left|
					\widehat{X}_{t}
					-
					\widehat{X}_t^{(*)}
				\right|^p
			\right].
	\end{align}
	On the other hand, the property (iv) and (vi) imply that $\widehat{X}$ and $\widehat{X}^{(*)}$ are solutions of SDE \eqref{SDE_1} driven by the same Brownian motion $\widehat{W}$, with non-sticky condition \eqref{boundary_1} on the probability space 
	$(\widehat{\Omega},\widehat{\mathcal{F}},\widehat{\p})$.
	Hence from the assumption on the pathwise uniqueness, \eqref{pr_main_1} and \eqref{pr_main_0}, we conclude $0<\varepsilon_0 \leq 0$.
	This is the contradiction.
\end{proof}

\section*{Acknowledgements}
The authors would like to thank Professor Masatoshi Fukushima for his valuable comments.
The authors would also like to thank an anonymous referee for his/her careful readings and advices.
The first author was supported by JSPS KAKENHI Grant Number 17H06833.
The second author was supported by Sumitomo Mitsui Banking Corporation.

\end{document}